\documentclass[reqno, a4paper, 10pt, draft]{amsart}
\frenchspacing

\usepackage{amsmath,amsthm,amssymb,amscd,amstext,amsopn,amsxtra,amsfonts, eucal, bbm}

\usepackage[all]{xy}
\SelectTips{eu}{}
\entrymodifiers={+!!<0pt,\fontdimen22\textfont2>}

\usepackage[pdftex]{hyperref}
\hypersetup{
linktocpage
}

\swapnumbers

\newtheorem{thm}{Theorem}[subsection]
\newtheorem{lemma}[thm]{Lemma}
\newtheorem{prop}[thm]{Proposition}
\newtheorem{cor}[thm]{Corollary}

\theoremstyle{remark}

\theoremstyle{definition}

\newtheorem{remark}[thm]{Remark}
\newtheorem{remarks}[thm]{Remarks}

\numberwithin{equation}{section}

\def\e{\mathbbm{1}}

\def\cA{\mathcal{A}}
\def\cB{\mathcal{B}}
\def\cC{\mathcal{C}}

\def\cO{\mathcal{O}}

\def\cR{\mathcal{R}}

\def\CC{\mathbf{C}}

\def\QQ{\mathbf{Q}}

\def\ZZ{\mathbf{Z}}

\def\fh{\mathfrak{h}}

\def\fn{\mathfrak{n}}

\def\fgl{\mathfrak{gl}}  
\def\fsl{\mathfrak{sl}}

\def\Comp{\mathrm{Comp}}

\def\Db{\mathrm{D^b}}

\def\End{\mathrm{End}}

\def\Hob{\mathrm{Ho^b}}
\def\Hom{\mathrm{Hom}}

\def\id{\mathrm{id}}

\def\pr{\mathrm{pr}}

\def\tr{\mathrm{tr}}


\newcommand{\mapright}[1]{\stackrel{#1}{\longrightarrow}}

\makeatletter
\renewcommand{\@makefnmark}{\mbox{\textsuperscript{}}}
\makeatother

\title{Derived equivalences and $\fsl_2$-categorifications for $U_q(\fgl_n)$}
\author{R. Virk}
\address{Department of Mathematics\\
University of Wisconsin\\
Madison, WI 53706}
\email{virk@math.wisc.edu}

\begin{document}

\maketitle
\setcounter{tocdepth}{1}
\tableofcontents

\renewcommand{\thepart}{\textbf{\Roman{part}}}
\section{Introduction}
The notion of $\fsl_2$-categorifications was introduced by J.\ Chuang and R.\ Rouquier in \cite{ChRo}. In \emph{op. cit.} it is explained how an $\fsl_2$-categorification on an abelian category $\cA$ leads to an auto-equivalence of the derived category $\Db(\cA)$. This is in turn elegantly exploited to give a proof of Brou\'e's abelian defect group conjecture for blocks of symmetric groups \cite[Thm.\ 7.6]{ChRo}. 

The $q$-Schur algebra is the centralizer algebra of a certain module for the finite Hecke algebra of type A (see \cite[\S7.6]{ChRo} for the precise definition). In \cite{ChRo}, J.\ Chuang and R.\ Rouquier also explain how $\fsl_2$-categorifications may be obtained for the $q$-Schur algebras. This is done by constructing $\fsl_2$-categorifications for the finite Hecke algebra and then transferring these to the $q$-Schur algebra using a `double centralizer theorem' (see \cite[\S7.6]{ChRo}).

The purpose of this note is to give a direct construction of $\fsl_2$-categorifications for the quantum group $U_q(\fgl_n)$ (Prop.\ \ref{categorificationgenericq} and Prop.\ \ref{categorificationrootsof1}). One motivation for doing so is Thm.\ \ref{dequivgenericq} which is in the same spirit as \cite[Thm.\ 7.24]{ChRo}. A related example fitting into this setup can be found in \cite{BS} (see Remark 5.7 therein). 

\section{Monoidal categories}
The purpose of this section is to precisely define the type of categories we will be working with. This avoids all possible confusion as to `how strict' our monoidal structures are. The exposition mainly follows \cite{Tu} (also see \cite{CP}).

\subsection{}\label{ssdefstrictmonoidal}
A \emph{strict monoidal category} is a $3$-tuple $\cC=(\cC,\otimes, \e)$ consisting of a category $\cC$, a bifunctor $\otimes\colon \cC\times \cC \to \cC$, a distinguished object $\e\in\cC$ called the \emph{unit}, satisfying the following:

\begin{enumerate}
	\item For all $V\in\cC$, $\e\otimes V=V$ and $V\otimes\e = V$.
\item Let $X_1$ and $X_2$ are two expressions obtained from $V_1\otimes V_2\otimes \cdots \otimes V_m$ by inserting $\e$'s and gramatically correct parentheses: an example of such an expression is
		$(V_1\otimes \e)\otimes ((V_2\otimes V_3)\otimes\cdots \otimes V_m)$.
Then $X_1=X_2$.
\end{enumerate}

\begin{remark}Almost all the examples of monoidal categories that arise `in nature' are non-strict (for example, vector spaces). That is, all the equalities in the defining axioms need to be replaced by functorial isomorphisms. Fortunately, it is known that every monoidal category is equivalent to a strict one \cite[Ch.\ XI \S3, Thm.\ 1]{MacL}. This result will constantly be invoked to omit parentheses and the associativity and unit isomorphisms in our formulas even when dealing with non-strict monoidal categories. 
\end{remark}

\subsection{}
		A \emph{braiding} or \emph{$R$-matrix} in a strict monoidal category $\cC$ is a collection of isomorphisms
		\[ \cR_{VW}\colon V\otimes W {\mapright \sim} W\otimes A, \]
		for all $V,W\in\cC$, satisfying:
		
		\begin{enumerate}
			\item For every $f\colon X\to X'$ and $g\colon Y\to Y'$ in $\cC$, the diagram
	\begin{equation}\label{RfunctorialX}
					\xymatrixrowsep{1.5pc}\xymatrixcolsep{2pc}\xymatrix{ X\otimes Y\ar[d]_{\cR_{XY}}\ar[r]^{f\otimes g}& X'\otimes Y'\ar[d]^{\cR_{X'Y'}} \\
					Y\otimes X\ar[r]^{g\otimes f}&Y'\otimes X'}
				\end{equation}
				commutes.
			\item For every $X\in\cC$, $\cR_{X\e}=\id=\cR_{\e X}$. In particular, $\cR_{\e\e}=\id$.
			\item The following two diagrams commute for all $X,Y,Z\in\cC$:
		\begin{equation}\label{Rtriangle1X}\xymatrixcolsep{1pc}\xymatrixrowsep{1.5pc}\xymatrix{
&X\otimes Y\otimes Z\ar@/^0pc/[ld]_{\id\otimes \cR_{YZ}}\ar@/^0pc/[rd]^{\cR_{(X\otimes Y)Z}} \\
X\otimes Z\otimes Y \ar[rr]_{\cR_{XZ}\otimes\id}&&Z\otimes X\otimes Y
}\end{equation}
\begin{equation}\label{Rtriangle2X}\xymatrixcolsep{1pc}\xymatrixrowsep{1.5pc}\xymatrix{
&X\otimes Y\otimes Z\ar@/0pc/[ld]_{\cR_{XY}\otimes\id}\ar@/0pc/[rd]^{\cR_{X(Y\otimes Z)}}\\
Y\otimes X\otimes Z \ar[rr]_{\id\otimes\cR_{XZ}}&&Y\otimes Z\otimes X 
}\end{equation}
\end{enumerate}
Combining \eqref{Rtriangle1X} and \eqref{Rtriangle2X} we obtain the `hexagon diagram', i.e.
\begin{equation}\label{hexagonX}\xymatrixrowsep{1.5pc}\xymatrixcolsep{1pc}\xymatrix{
&Y\otimes X\otimes Z\ar@/0pc/[rd]^{\id\otimes\cR_{XZ}}& \\
X\otimes Y\otimes Z\ar@/0pc/[ur]^{\cR_{XY}\otimes \id}\ar[d]_{\id\otimes\cR_{YZ}}\ar[rr]^{\cR_{X(Y\otimes Z)}}&&Y\otimes Z\otimes X\ar[d]^{\cR_{YZ}\otimes \id} \\
X\otimes Z\otimes Y\ar@/0pc/[rd]_{\cR_{XZ}\otimes \id}\ar[rr]^{\cR_{X(Z\otimes Y)}}&&Z\otimes Y\otimes X \\
&Z\otimes X\otimes Y\ar@/0pc/[ur]_{\id\otimes\cR_{XY}}&
}\end{equation}
commutes for all $X,Y,Z\in\cC$.

\subsection{}\label{ssdeffduals}
Let $\cC$ be a strict monoidal category and let $V\in\cC$. A \emph{left dual} to $V$ is an object $V^*\in \cC$ along with two morphisms
\begin{equation}
	\varepsilon_V\colon V^*\otimes V\to \e \qquad\mbox{and}\qquad
	\eta_V\colon \e\to V\otimes V^*,
\end{equation}
such that the compositions
\begin{equation}\label{dualityunit}\xymatrixcolsep{3.5pc}\xymatrix{V=\e\otimes V \ar[r]^-{\eta_V\otimes\id}& V\otimes V^*\otimes V \ar[r]^-{\id\otimes\varepsilon_V}& V\otimes \e=V, }\end{equation}
\begin{equation}\label{dualitycounit}\xymatrixcolsep{3.5pc}\xymatrix{ V^*=V^*\otimes\e\ar[r]^-{\id\otimes\eta_V}& V^*\otimes V\otimes V^* \ar[r]^-{\varepsilon_V\otimes\id}& \e\otimes V^*=V^*}\end{equation}
are equal to the identity.
Similarly, the \emph{right dual} of an object $V$ is an object $V^{\circledast}\in \cC$ along with morphisms
\begin{equation} \varepsilon_V'\colon V\otimes V^{\circledast} \to \e \qquad\mbox{and}\qquad \eta_V'\colon\e\to V^{\circledast}\otimes V, \end{equation}
satisfying the obvious analogues of the identities in the previous definition.

\begin{remark}
In \cite{Tu} only $V^*$ is considered and is simply called the dual of $V$. Our choice of `left/right' is justified by the fact that the functor $V^*\otimes -$ is left adjoint to $V\otimes -$. Similarly $V^{\circledast}\otimes -$ is right adjoint to $V\otimes -$.
\end{remark}

\subsection{}
A \emph{ribbon} category $\cC$ is a braided category equipped with an automorphism $\theta=\{\theta_X\colon X{\mapright \sim}X\,|\, X\in\cC\}$ of the identity functor, satisfying:
\begin{equation}\label{casimir1}
	\theta_{X\otimes Y}\circ(\theta_X\otimes\theta_Y)^{-1}=\cR_{YX}\circ \cR_{XY},
	\end{equation}
for all $X,Y\in\cC$.

\section{Affine braid group action in ribbon categories}\label{s:braidaction}
\subsection{}Let $\cC$ be a ribbon category. Fix an object $V$ in $\cC$. Denote by $F_V$ the functor $V\otimes -\colon\cC\to\cC$.
Define an endomorphism $Y=\{Y_M\colon F_V(M)\to F_V(M)\,|\, M\in\cC\}$
of $F_V$ by
\[ Y_M(V\otimes M)= \cR_{MV}\circ\cR_{VM}(V\otimes M). \]
Define an endomorphism $\sigma=\{\sigma_M\colon F_V^2(M)\to F_V^2(M)\,|\, M\in\cC\}$ of $F_V^2$ by
\[ \sigma_M(V^{\otimes 2}\otimes M)=(\cR_{VV}\otimes\id)(V^{\otimes 2}\otimes M). \]
Let $k\in\ZZ_{>0}$ and define endomorphisms $Y_i$, $1\leq i\leq k+1$ and $\sigma_i$, $1\leq i \leq k$ of $F_V^{k+1}$ by
\[
	Y_i=\id^{\otimes k-i+1}\otimes Y_{V^{\otimes i-1}\otimes M}, \qquad
	\sigma_i=\id^{\otimes k-i}\otimes \sigma_{V^{\otimes i-1}\otimes M}.
\]
The following is well known (for instance, see \cite{LR}).
\begin{prop}\label{braidrels}Let $M\in\cC$. The following equalities hold in $\End(F_V^{k+1}(M))$:
\begin{align}
	\sigma_i\sigma_j&=\sigma_j\sigma_i, \qquad\mbox{if $|i-j|>1$}; \label{coxeter1}\\
	\sigma_i\sigma_{i+1}\sigma_i&=\sigma_{i+1}\sigma_i\sigma_{i+1}, \label{coxeter2} \\
	\sigma_iY_i\sigma_i&=Y_{i+1},  \label{affine1} \\
	\sigma_iY_j&=Y_j\sigma_i, \qquad \mbox{if $|i-j|>1$}; \label{affine2} \\
	\sigma_iY_iY_{i+1}&=Y_{i+1}Y_i\sigma_i,  \label{affine3} \\
	Y_iY_j&=Y_jY_i. \label{affine4}
\end{align}
\end{prop}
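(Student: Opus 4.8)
The plan is to verify each of the six relations by drawing the corresponding morphism diagrams and reducing everything to the defining axioms of a ribbon category: functoriality of the braiding \eqref{RfunctorialX}, the hexagon \eqref{hexagonX} (equivalently the pair \eqref{Rtriangle1X}–\eqref{Rtriangle2X}), and the twist identity \eqref{casimir1}. Since the $Y_i$ and $\sigma_i$ are all built by applying $\id^{\otimes k-i+1}\otimes(-)$ to the ``local'' endomorphisms $Y_{V^{\otimes i-1}\otimes M}$ and $\sigma_{V^{\otimes i-1}\otimes M}$, and the $\otimes$-functor is strictly associative by Section~\ref{ssdefstrictmonoidal}, every relation can be checked after stripping off the outermost identity factors; so throughout one works inside $\End(F_V^{k+1}(M))$ but the content is concentrated on the tensor slots that $Y_i,\sigma_i,Y_j,\sigma_j$ actually touch.

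First I would dispose of the ``far commuting'' relations \eqref{coxeter1}, \eqref{affine2}, \eqref{affine4}: when $|i-j|>1$ the morphisms $\sigma_i$ and $\sigma_j$ (resp.\ $\sigma_i$ and $Y_j$) act on disjoint tensor factors — $\sigma_i$ on the $i$-th and $(i+1)$-st copies of $V$, $Y_j$ on the $j$-th — so they commute by the interchange law in the bifunctor $\otimes$, i.e.\ $(a\otimes\id)(\id\otimes b)=a\otimes b=(\id\otimes b)(a\otimes\id)$. For \eqref{affine4} I would note that $Y_i$ and $Y_j$ for $i<j$ can be made to act on disjoint slots after using functoriality \eqref{RfunctorialX} to slide $Y_i=\cR_{\cdot V}\cR_{V\cdot}$ past the copy of $V$ that $Y_j$ braids with — this is exactly the kind of Reidemeister-type slide that \eqref{RfunctorialX} encodes — so \eqref{affine4} reduces again to the interchange law; alternatively it follows formally once one recognizes $Y_i$ as a double braiding whose naturality is built into \eqref{RfunctorialX}. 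Next, \eqref{coxeter2} is precisely the Yang–Baxter equation for $\cR_{VV}$, which is the standard consequence of the hexagon \eqref{hexagonX} applied with $X=Y=Z=V$; I would draw the two sides of \eqref{coxeter2} as the two ways of ``combing'' three strands and match them against the hexagon.

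The relations that genuinely use the ribbon/braided structure more seriously are \eqref{affine1} and \eqref{affine3}. For \eqref{affine1}, $\sigma_i Y_i\sigma_i = Y_{i+1}$: here $Y_i$ is the double braiding of the $i$-th copy of $V$ around everything to its right (including $M$), and conjugating by the swap $\sigma_i=\cR_{VV}$ of the $i$-th and $(i+1)$-st copies should turn it into the double braiding $Y_{i+1}$ of the $(i+1)$-st copy; this is a naturality/hexagon computation — sliding $\cR_{VV}$ through $\cR_{V,-}$ and $\cR_{-,V}$ using \eqref{Rtriangle1X}–\eqref{Rtriangle2X} — and is the algebraic shadow of the fact that in the affine braid group $s_i t_i s_i = t_{i+1}$. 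Relation \eqref{affine3}, $\sigma_i Y_iY_{i+1}=Y_{i+1}Y_i\sigma_i$, says that $\sigma_i$ commutes with the product $Y_iY_{i+1}$ (the ``total'' double braiding of the two-strand block $V^{\otimes 2}$ around $M$); I would prove this by observing that $Y_iY_{i+1}$ equals, up to the twist via \eqref{casimir1}, the double braiding of the block $V^{\otimes 2}\otimes M$-structure, and that $\cR_{VV}$, being natural, commutes with any such block braiding — concretely, one expands $Y_iY_{i+1}$ as a composite of four $\cR$'s and uses \eqref{RfunctorialX} with $f=\cR_{VV}$, $g=\id$ twice, then reassembles. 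The twist $\theta$ itself is not strictly needed for these identities since they only involve the $Y$'s and $\sigma$'s, but \eqref{casimir1} is the cleanest way to package the ``block double braiding'' rewriting.

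The main obstacle I anticipate is purely bookkeeping: keeping careful track of exactly which tensor slots each endomorphism occupies after each rewrite, so that an application of \eqref{RfunctorialX} or a hexagon really does line up with the correct three consecutive factors, and so that the strict-associativity identifications of Section~\ref{ssdefstrictmonoidal} are being used legitimately. There is no conceptual difficulty — each relation is a short diagram chase — but the indices ($k-i+1$ copies of $\id$, then the local operator) make it easy to slip. Drawing the string diagrams (strands for the copies of $V$ and for $M$, crossings for the $\cR$'s) and reading off \eqref{coxeter1}–\eqref{affine4} as ambient-isotopy statements is, as the reference \cite{LR} indicates, the way to make all of this transparent; I would present the proof in that graphical language with the axiom invoked at each step noted in the margin.
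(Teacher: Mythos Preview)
Your overall approach matches the paper's---reduce to local computations and apply the braided-category axioms---but there are two places where your plan diverges from what actually works, and one genuine simplification you miss.

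First, your treatment of \eqref{affine2} as a ``disjoint slots'' relation is only half right. With the paper's indexing, $Y_j$ is \emph{not} supported on the $j$-th tensor factor alone: it is the double braiding of the $j$-th copy of $V$ (from the right) around $V^{\otimes j-1}\otimes M$, so it touches slots $1,\ldots,j$. When $i>j+1$ the supports are indeed disjoint and the interchange law suffices, but when $j>i+1$ the support of $Y_j$ strictly contains that of $\sigma_i$, and one must invoke functoriality \eqref{RfunctorialX} (with $f=\id$ and $g=\sigma$) twice to slide $\sigma_i$ through the two $\cR$'s defining $Y_j$. The paper separates these two cases explicitly. The same issue makes your remark ``$Y_i$ and $Y_j$ can be made to act on disjoint slots'' for \eqref{affine4} too glib: in the adjacent case $Y_1Y_2=Y_2Y_1$ there is no disjointness to be had, and the paper carries out a four-step chain of hexagon and functoriality rewrites to establish it directly.

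Second, you overcomplicate \eqref{affine3}. Once \eqref{affine1} is in hand it is a one-line algebraic consequence: $\sigma_iY_iY_{i+1}=\sigma_iY_i(\sigma_iY_i\sigma_i)=(\sigma_iY_i\sigma_i)Y_i\sigma_i=Y_{i+1}Y_i\sigma_i$. No twist, no block-braiding interpretation, is needed. In fact the ribbon structure \eqref{casimir1} is never used in the paper's proof of any of the six relations; the proposition holds in an arbitrary braided category, and your invocation of $\theta$ is a red herring. Your plan will go through if you drop the twist, fix the support analysis for $Y_j$, and accept that \eqref{affine4} in the adjacent case is where the real diagram chase lives.
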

\begin{proof}
The equality \eqref{coxeter1} is clear from the definition of the $\sigma_i$s. It suffices to verify \eqref{coxeter2} for $i=1$. In this case it is simply the outer arrows of \eqref{hexagonX} with $X=Y=Z=V$. To prove \eqref{affine1} it is again sufficient to do so for $i=1$. That is, we need to show
	\[ (\cR_{VV}\otimes \id)\circ (\id\otimes \cR_{MV})\circ (\id\otimes \cR_{VM})\circ (\cR_{VV}\otimes \id) = \cR_{(V\otimes M)V}\circ\cR_{V(V\otimes M)}. \]
	Set $X=Y=V$ and $Z=M$ in \eqref{Rtriangle2X} to get $(\id\otimes\cR_{VM})\circ (\cR_{VV}\otimes\id)=\cR_{V(V\otimes M)}$. Set $X=Z=V$ and $Y=M$ in \eqref{Rtriangle1X} to get $(\cR_{VV}\otimes\id)\circ (\id\otimes \cR_{MV})=\cR_{(V\otimes M)V}$. This gives the desired equality.

	The equality \eqref{affine2} is clear if $i>j+1$. If $j>1+i$ then \eqref{affine2} follows by applying \eqref{RfunctorialX} (twice) with $X=X'=V^{\otimes k-j+1}$, $Y=Y'=V^{\otimes j-1}$, $f=\id^{\otimes k-i}$, $g=\sigma_{V^{\otimes i-1}\otimes M}$. Equation \eqref{affine3} is immediate from \eqref{affine1}.
	
	Finally, to show \eqref{affine4} we may assume that $i>j$. If $i>j+1$, then the equality is clear from \eqref{coxeter1} and \eqref{affine1}. If $i=j+1$, then we are reduced to proving the claim for $j=1$. That is, we want to show
	\begin{align*}&\cR_{(V\otimes M)V}\circ\cR_{V(V\otimes M)}\circ (\id\otimes \cR_{MV})\circ (\id\otimes \cR_{VM}) \\
	= &(\id\otimes \cR_{MV})\circ (\id\otimes \cR_{VM})\circ \cR_{(V\otimes M)V}\circ \cR_{V(V\otimes M)}. \end{align*}
	Set $X=Z=V$ and $Y=M$ in the middle rectangle of \eqref{hexagonX} to get
	\begin{align*}
		&\quad\cR_{(V\otimes M)V}\circ\cR_{V(V\otimes M)}\circ (\id\otimes \cR_{MV})\circ (\id\otimes \cR_{VM})\\
		&= \cR_{(V\otimes M)V}\circ (\cR_{MV}\otimes\id)\circ \cR_{V(M\otimes V)}\circ (\id\otimes \cR_{VM}), \\
		\intertext{put $X=Y=V$ and $Z=M$ in the middle rectangle of \eqref{hexagonX} to obtain that this}
		&=\cR_{(V\otimes M)V}\circ(\cR_{MV}\otimes\id)\circ(\cR_{VM}\otimes\id)\circ\cR_{V(V\otimes M)}, \\
		\intertext{set $X=M\otimes V$, $X'=V\otimes M$, $Y=Y'=V$, $f=\cR_{MV}$ and $g=\id$ in \eqref{RfunctorialX} to get that this}
		&=(\id\otimes\cR_{MV})\circ\cR_{(M\otimes V)V}\circ(\cR_{VM}\otimes \id)\circ \cR_{V(V\otimes M)}, \\
		\intertext{finally, set $X=V\otimes M$, $X'=M\otimes V$, $Y=Y'=V$, $f=\cR_{VM}$ and $g=\id$ in \eqref{RfunctorialX} to get that this}
		&=(\id\otimes\cR_{MV})\circ(\id\otimes \cR_{VM})\circ\cR_{(V\otimes M)V}\circ\cR_{V(V\otimes M)}. \qedhere
\end{align*}
\end{proof}

\section{$\fsl_2$-categorifications}
We now summarize some of the results of \cite{ChRo} that we will be needed later.
\subsection{}
Let $q_0\in \CC^{\times}$. Assume $q_0\neq 1$. The \emph{affine Hecke algebra} $H_{k+1}=H_{k+1}(q_0)$ is the $\CC$-algebra with generators
$T_1^{\pm 1},\ldots, T_k^{\pm 1}, X_1^{\pm 1},\ldots, X_{k+1}^{\pm 1}$ subject to the relations
\begin{align*}
(T_i+1)(T_i-q_0) &=0,  \\
T_iT_i^{-1}&=1=T_i^{-1}T_i, \\
X_iX_i^{-1}&=1=X_i^{-1}X_i, \\
	T_iT_j&=T_jT_i, \qquad\mbox{if $|i-j|>1$}; \\
	T_iT_{i+1}T_i&=T_{i+1}T_iT_{i+1},  \\
	T_iX_iT_i&=q_0X_{i+1},  \\
	T_iX_j&=X_jT_i, \qquad \mbox{if $|i-j|>1$}; \\
	T_iX_iX_{i+1}&=X_{i+1}X_iT_i,  \\
	X_iX_j&=X_jX_i. \qedhere
\end{align*}

\begin{prop}\label{commutation1}Let $a\in\CC$, $N\in\ZZ_{>0}$. Then
\begin{align*}&T_1(X_2-a)^N - (X_1-a)^NT_1\\
= &(q_0-1)X_2((X_1-a)^{N-1} + (X_1-a)^{N-2}(X_2-a)+ \cdots + (X_2-a)^{N-1}). \end{align*}
\end{prop}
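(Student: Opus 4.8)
The plan is to prove the identity by induction on $N$. For the base case $N=1$, we need to check that
\[ T_1(X_2-a) - (X_1-a)T_1 = (q_0-1)X_2, \]
which expands to $T_1X_2 - aT_1 - X_1T_1 + aT_1 = T_1X_2 - X_1T_1$. So it suffices to show $T_1X_2 - X_1T_1 = (q_0-1)X_2$. This follows directly from the defining relations of $H_{k+1}$: from $T_1X_1X_2 = X_2X_1T_1$ and $T_1X_1T_1 = q_0X_2$. Indeed, multiply $T_1X_1T_1 = q_0X_2$ on the right by $T_1^{-1}$ and use $T_1^{-1} = q_0^{-1}(T_1 - q_0 + 1)$ (obtained from $(T_1+1)(T_1-q_0)=0$, i.e.\ $T_1^2 = (q_0-1)T_1 + q_0$) to rewrite $T_1X_1 = q_0X_2T_1^{-1} = X_2(T_1 - q_0 + 1)$, giving $T_1X_1 = X_2T_1 - (q_0-1)X_2$; then since $X_1$ and $X_2$ commute, we want instead $X_1T_1$. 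Alternatively, from $T_1X_1X_2 = X_2X_1T_1$ and the fact that $X_2$ is invertible and central enough—more carefully, I will extract $T_1X_2 = ?$ first. The cleanest route: the relation $T_1X_1X_2 = X_2X_1T_1$ combined with $T_1X_1T_1 = q_0X_2$ yields, after right-multiplication by $T_1^{-1}$ on the latter and substitution, the commutation $X_1T_1 = T_1X_2 - (q_0-1)X_2$, which is exactly the $N=1$ case.

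For the inductive step, assume the identity holds for $N-1$. Write $(X_2-a)^N = (X_2-a)^{N-1}(X_2-a)$ and compute
\[ T_1(X_2-a)^N = T_1(X_2-a)^{N-1}(X_2-a) = \bigl[(X_1-a)^{N-1}T_1 + E_{N-1}\bigr](X_2-a), \]
where $E_{N-1} = (q_0-1)X_2\sum_{j=0}^{N-2}(X_1-a)^{N-2-j}(X_2-a)^j$ is the claimed expression for the $N-1$ case. Now I need to move $T_1$ past $(X_2-a)$ using the base case: $T_1(X_2-a) = (X_1-a)T_1 + (q_0-1)X_2$. Since $X_1, X_2$ commute with each other, $(X_1-a)^{N-1}$ commutes with $(X_2-a)$ and with $X_2$, so everything collects cleanly. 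The first term contributes $(X_1-a)^{N-1}(X_1-a)T_1 + (q_0-1)(X_1-a)^{N-1}X_2 = (X_1-a)^NT_1 + (q_0-1)X_2(X_1-a)^{N-1}$, and the second term $E_{N-1}(X_2-a)$ contributes $(q_0-1)X_2\sum_{j=0}^{N-2}(X_1-a)^{N-2-j}(X_2-a)^{j+1} = (q_0-1)X_2\sum_{j=1}^{N-1}(X_1-a)^{N-1-j}(X_2-a)^{j}$. Adding the two error terms gives $(q_0-1)X_2\sum_{j=0}^{N-1}(X_1-a)^{N-1-j}(X_2-a)^{j}$, which is precisely the claimed expression for $N$. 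This completes the induction.

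I expect the only real subtlety to be the base case $N=1$, i.e.\ isolating the relation $X_1T_1 = T_1X_2 - (q_0-1)X_2$ from the given presentation; the rest is a routine telescoping of commuting polynomials in $X_1$ and $X_2$ driven by the $X_iX_j = X_jX_i$ relations. One should be careful that all powers of $(X_1-a)$ are genuinely central against $(X_2-a)$ and $X_2$—this is immediate since they are polynomials in the commuting elements $X_1, X_2$—so no further Hecke relations intervene in the inductive step.
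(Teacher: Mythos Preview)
Your argument is correct and follows the same route as the paper: induction on $N$, with the inductive step obtained by peeling off one factor of $(X_2-a)$, applying the hypothesis, and then the $N=1$ case. Your base case derivation is more roundabout than necessary; the paper simply writes $T_1X_2 = q_0^{-1}T_1^2X_1T_1$ and expands $T_1^2$ via the quadratic relation to get $T_1X_2 = X_1T_1 + (q_0-1)X_2$ in one line, whereas you first land on the wrong-sided relation $T_1X_1 = X_2T_1 - (q_0-1)X_2$ before correcting course (the clean fix is to left-multiply $T_1X_1T_1=q_0X_2$ by $T_1^{-1}$ rather than right-multiply).
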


\begin{proof}If $N=1$, then the result is given by:
\[ T_1X_2 = q_0^{-1}T_1^2X_1T_1 = q_0^{-1}(q_0-(1-q_0)T_1)X_1T_1 = X_1T_1 + (q_0-1)X_2.\]
For arbitrary $N$, proceeding by induction, assume the statement for $N-1$. Then
\begin{align*}
T_1(X_2-a)^N &= (X_1-a)^{N-1}T_1(X_2-a) + (q_0-1)X_2((X_1-a)^{N-2}(X_2-a)\\
&\quad  + (X_1-a)^{N-3}(X_2-a)^2 + \cdots + (X_2-a)^{N-1}). \\
\intertext{Applying the computation for $N=1$, this is}
&=(X_1-a)^{N-1}((X_1-a)T_1 +(q_0-1)X_2) \\
&\quad+ (q_0-1)X_2((X_1-a)^{N-2}(X_2-a)\\
&\quad + (X_1-a)^{N-3}(X_2-a)^2 + \cdots + (X_2-a)^{N-1}) \\
&=(X_1-a)^NT_1 + (q_0-1)X_2((X_1-a)^{N-1}\\
&\quad  + (X_1-a)^{N-2}(X_2-a)+ \cdots + (X_2-a)^{N-1}).\qedhere
\end{align*}
\end{proof}

\subsection{}Let $S_{k+1}$ be the symmetric group on $k+1$ letters. Let $\ell\colon S_{k+1}\to\ZZ_{\geq 0}$ denote the length function. Let $s_i$ denote the simple transposition $(i,i+1)$. Given a reduced expression $w=s_{i_1}\cdots s_{i_r}$ for $w\in S_{k+1}$, put $T_w=T_{s_{i_1}}\cdots T_{s_{i_r}}$. The element $T_w$ is independent of the reduced expression.

\subsection{}Let $H_{k+1}^f$ denote the subalgebra of $H_{k+1}$ generated by the $T_i$. Define
\[\textbf{1}\colon H_{k+1}^f \to \CC, \quad T_i \mapsto q, \qquad
\mbox{and} \qquad \mathrm{sgn}\colon H_{k+1}^f \to \CC,\quad T_i \mapsto -1.\]
For $\tau\in\{\textbf{1}, \mathrm{sgn}\}$, put $c_{k+1}^{\tau} = \sum_{w\in S_n} q^{-\ell(w)}\tau(T_w) T_w$.

\subsection{}Let $\fsl_2$ be the Lie algebra of $2\times 2$ traceless matrices. It has a basis given by
\[ e=\begin{pmatrix} 0 & 1 \\ 0 & 0 \end{pmatrix},\qquad
f=\begin{pmatrix} 0 & 0 \\ 1 & 0 \end{pmatrix} \qquad \mbox{and} \qquad
h=ef-fe = \begin{pmatrix} 1 & 0 \\ 0 & -1\end{pmatrix}. \]
Set
\begin{align*} s&=\exp(-f)\exp(e)\exp(-f) =\begin{pmatrix} 0 & 1 \\ -1 & 0\end{pmatrix},\\
 s^{-1}&=\exp(f)\exp(-e)\exp(f)=\begin{pmatrix} 0 & -1 \\ 1 & 0\end{pmatrix}.\end{align*}
Let $U$ be a direct sum (possibly infinite) of finite dimensional $\fsl_2$-modules. Then the operator $s$ is well defined on $U$.
If $V$ and $W$ are $\fsl_2$ modules then so is $V\otimes W$ via
\[ x(v\otimes w) = xv\otimes w + v\otimes xw, \qquad x\in\fsl_2, v\in V, w\in W.\]

\subsection{}Let $\cA$ be an abelian category with the property that every object of $\cA$ is a successive extension of finitely many simple objects. Assume that the $\Hom$ groups in $\cA$ are in fact $\CC$-vector spaces and that composition is bilinear with respect to this structure (i.e., $\cA$ is enriched over $\CC$). Further, assume that the endomorphism ring of a simple object is $\CC$.
Write $K_0(\cA)$ for the Grothendieck group of $\cA$. By definition $K_0(\cA)$ is generated by symbols $[A]$, $A\in \cA$ and relations $[A]-[B]+[C] = 0$, for every exact sequence $0\to A \to B \to C \to 0$ in $\cA$.

\subsection{}Write $\Hob(\cA)$ for the homotopy category of bounded complexes in $\cA$ and $\Db(\cA)$ for the bounded derived category of $\cA$. The Grothendieck group $K_0(\Db(\cA))$ is generated by symbols $[A]$, $A\in \Db(\cA)$ and relations $[A]-[B]+[C]$, for every distinguished triangle $A\to B \to C \leadsto$. The map $K_0(\Db(\cA))\to K_0(\cA)$
given by
$[A]\mapsto \sum_i [H^i(A)]$,
where $H^{\bullet}(A)$ denotes the cohomology of the complex $A$, is an isomorphism. The inverse is given by the map induced by the embedding $\cA \hookrightarrow \Db(\cA)$. We identify $K_0(\Db(\cA))$ with $K_0(\cA)$ via this isomorphism.

\subsection{}An adjunction $(E, F)$ is the data of functors $E\colon\cA\to\cB$, $F\colon\cB\to\cA$ and morphisms
$\eta\colon \id_{\cA} \to FE$, $\varepsilon\colon EF \to \id_{\cB}$,
such that the compositions
\[\xymatrixcolsep{3.5pc}\xymatrix{F \ar[r]^-{\eta\e_F}& FEF \ar[r]^-{\e_F\varepsilon}& F }\qquad
\mbox{and}\qquad \xymatrixcolsep{3.5pc}\xymatrix{ E\ar[r]^-{\e_E\eta}& E F E \ar[r]^-{\varepsilon\e_E}& E}\]
are equal to the identity.
\begin{remark}If $\cA=\cB$, then the adjunction $(E,F)$ is precisely the data of a left dual (=left adjoint) $E$ to the functor $F$ in the monoidal category of endo-functors of $\cA$.
\end{remark}

\subsection{}A \emph{weak $\fsl_2$-categorification} is the data of an adjunction $(E,F)$ of exact endo-functors of $\cA$ such that
\begin{itemize}
\item the action of $e=[E]$ and $f=[F]$ on $V=\QQ\otimes K_0(\cA)$ gives a locally finite $\fsl_2$-representation;
\item the classes of simple objects are weight vectors;
\item $F$ is isomorphic to a left adjoint of $E$.
\end{itemize}
Denote by $\varepsilon\colon EF \to \id$ and $\eta\colon\id \to FE$ the (fixed) counit and unit of the adjunction $(E,F)$.

\begin{prop}\cite[Prop.\ 5.5]{ChRo}\label{blocks}Fix a weak $\fsl_2$-categorification $(E,F)$ on $\cA$.
Let $m\in \ZZ$ and let $V_m$ denote the $m$-weight space of $K_0(\cA)$ (viewed as a $\fsl_2$-module). Let $\cA_m$ denote the full subcategory of $\cA$ consisting of objects whose class is in $V_m$. Then, $\cA=\bigoplus_{m\in\ZZ} \cA_m$. In particular, the class of an indecomposable object of $\cA$ is a weight vector.
\end{prop}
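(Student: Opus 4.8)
The plan is to reduce the assertion to a vanishing of $\Ext^1$ between simple objects whose classes lie in different weight spaces, and then to prove that vanishing by pushing a hypothetical non-split extension to the end of its $\fsl_2$-weight string with a suitable power of $E$ or $F$, where the biadjunction $E\dashv F\dashv E$ produces a contradiction.

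First I would record the reduction. Since $K_0(\cA)$ is free on the classes of simple objects and these are weight vectors, an object $N$ satisfies $[N]\in V_\mu$ precisely when all its composition factors lie in $\Irr(\cA)_\mu:=\{S\text{ simple}:[S]\in V_\mu\}$; in particular $\cA_\mu$ is the Serre subcategory generated by $\Irr(\cA)_\mu$, and $\Hom_\cA(S,N)$ or $\Hom_\cA(N,S)$ can be nonzero (for $S$ simple) only if $S$ is a composition factor of $N$. Since $\cA$ has finite length it is the direct sum of the Serre subcategories attached to the connected components of its Ext-quiver on simples; hence $\cA=\bigoplus_\mu\cA_\mu$, and the last assertion follows because an indecomposable object lies in a single summand, as soon as one proves
\[(\star)\qquad \Ext^1_\cA(S,S')=0 \text{ whenever } [S]\in V_m,\ [S']\in V_{m'}\text{ and }m\ne m'.\]

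To prove $(\star)$, suppose $0\to S'\xrightarrow{\iota}M\xrightarrow{\pi}S\to0$ is non-split with $[S]\in V_m$, $[S']\in V_{m'}$, $m\ne m'$; then $M$ is indecomposable with socle $S'$ and head $S$. For a simple $T$ let $p_0(T)$, resp.\ $q_0(T)$, be the least integer $\ge1$ that kills $[T]$ under $e=[E]$, resp.\ $f=[F]$; by exactness of $E$ and $F$ these are also the least powers of $E$, resp.\ $F$, that annihilate $T$. An elementary $\fsl_2$-string computation gives $q_0(T)-p_0(T)=\wt([T])$, so $m\ne m'$ forces $p_0(S)\ne p_0(S')$ or $q_0(S)\ne q_0(S')$; hence some power of $E$ or of $F$ annihilates exactly one of the two ends of the extension. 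Suppose $E^d$ kills the head $S$ but not the socle $S'$ (the other three cases are analogous, interchanging the roles of $E$ and $F$, of socle and head, and the two sides of the biadjunction). Then the exact functor $E^d$ identifies $E^d(M)$ with $E^d(S')\ne0$, so a simple quotient $T$ of $E^d(S')$ yields a nonzero map $E^d(M)\twoheadrightarrow T$, whence by $E^d\dashv F^d$ a nonzero $\psi\colon M\to F^d(T)$. But $[F^d(T)]=f^d[T]\in V_{m'}$, so $S$ is not a composition factor of $F^d(T)$; therefore $\Hom_\cA(S,F^d(T))=0$, and the connecting map $\Hom_\cA(S',F^d(T))\to\Ext^1_\cA(S,F^d(T))$ is injective, because a nonzero morphism $S'\to F^d(T)$ extending along $\iota$ would either split $\iota$ or realise $M$ as a subobject of $F^d(T)$, the latter being impossible as $S$ is not a composition factor of $F^d(T)$. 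The long exact $\Hom_\cA(-,F^d(T))$-sequence of the extension then gives $\Hom_\cA(M,F^d(T))=0$, contradicting $\psi\ne0$.

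All the content is in the previous paragraph. The two points I expect to require care are: (i) showing that some power of $E$ or $F$ always kills exactly one end of the extension — this is the identity $q_0-p_0=\wt$ combined with $m\ne m'$; and (ii) the vanishing $\Hom_\cA(M,F^d(T))=0$, equivalently the injectivity of the connecting homomorphism, which is exactly where non-splitness of the extension and the fact that $F^d(T)$ lives entirely in the ``wrong'' weight space come in. The remaining ingredients — exactness of $E$ and $F$, the biadjunction, the behaviour of classes in $K_0(\cA)$, and the block decomposition of a finite-length abelian category — are purely formal.
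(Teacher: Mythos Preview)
The paper does not supply its own proof of this proposition; it simply quotes \cite[Prop.\ 5.5]{ChRo}. Your argument is correct and is essentially the one Chuang--Rouquier give: reduce the block decomposition to the vanishing of $\Ext^1$ between simples in distinct weight spaces, then for a hypothetical non-split extension use the string identity $q_0(T)-p_0(T)=\wt([T])$ to find a power of $E$ or $F$ that kills exactly one of the two ends, and obtain a contradiction from the biadjunction together with the fact that the relevant $F^dT$ (or $E^dT$) has all its composition factors in the ``wrong'' weight space.

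One minor point worth making explicit: the four cases you allude to are not all handled by the single adjunction $E^d\dashv F^d$. In the case you wrote out ($E^d$ kills the head $S$) you used $E\dashv F$ and the contravariant functor $\Hom(-,F^dT)$. When instead $E^d$ kills the socle $S'$, the same manoeuvre only yields $\Hom(M,F^dT)\cong\Hom(S,F^dT)$, which is no contradiction; here one must use the \emph{other} adjunction $F\dashv E$, take $T$ a simple \emph{sub}object of $E^dM\cong E^dS$, and run the dual argument with the covariant $\Hom(F^dT,-)$, exploiting that $M$ has simple head $S$. The $F^d$ cases are then genuinely symmetric to these two. This is routine, but since you invoke ``interchanging the two sides of the biadjunction'' it is worth saying precisely which side goes with which case.
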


\subsection{}An \emph{$\fsl_2$-categorification} is a weak $\fsl_2$-categorification with the extra data of $q_0\in \CC^{\times}$ and $a\in \CC$ with $a\neq 0$ if $q_0\neq 1$, and of $X\in \End(E)$ and $T\in \End(E^2)$ such that
\begin{itemize}
\item $(\e_ET)\circ(T\e_E)\circ(\e_ET) = (T\e_E)\circ (\e_ET)\circ(T\e_E)$;
\item $(T+\e_{E^2})\circ(T-q_0\e_{E^2})=0$;
\item $T\circ(\e_E X)\circ T = \begin{cases} q_0X\e_E & \mbox{if $q_0\neq 1$}, \\
X\e_E-T&\mbox{if $q_0=1$};\end{cases}$
\item $X-a$ is locally nilpotent;
\end{itemize}
where for a functor $G$, the symbol $\e_G$ denotes the identity transformation of $G$.

\subsection{}Given an $\fsl_2$-categorification, define a morphism $\gamma_n\colon H_{k} \to \End(E^k)$ by
\[ T_i \mapsto \e_{E^{k-i-1}}T\e_{E^{i-1}} \qquad \mbox{and}\qquad X_i\mapsto \e_{E^{n-i}} X \e_{E^{i-1}}. \]
Let $\tau\in\{\textbf{1},\mathrm{sgn}\}$. Put $E^{(\tau, k)} = E^kc_k^{\tau}$, the image of $c_k^{\tau}\colon E^n \to E^n$.

\subsection{}\label{ss:rickardcomplex}Let $m\in\ZZ$. Consider the complex of functors
\[ \Theta_m\colon \Comp(\cA_{-m})\to \Comp(\cA_m), \]
constructed as follows:
denote by $(\Theta_m)^{-r}$ the restriction of $E^{(\mathrm{sgn},m+r)}F^{(1,r)}$ to $\cA_{-m}$ for $r,-m+r\geq 0$. Put $(\Theta_{\lambda})^{-r}=0$ otherwise. The map
\[ \e_{E^{m+r-1}}\varepsilon \e_{F^{r-1}}\colon  E^{m+r-1}EFF^{r-1} \to E^{m+r-1}F^{r-1} \]
restricts to a map
$d^{-r}\colon E^{(\mathrm{sgn}, m+r)}F^{(1,r)} \to E^{(\mathrm{sgn},m+r-1)}F^{(1,r-1)}$.
Put
\[ \Theta_{\lambda} = \cdots \to (\Theta_{m})^{-i}\mapright{d^{-i}}(\Theta_m)^{-i+1}\to\cdots. \]
Then $\Theta_{\lambda}$ is a complex \cite[Lemma 6.1]{ChRo}.
Let
$\Theta=\bigoplus_{m\in\ZZ}\Theta_m$.
\begin{thm}\cite[Thm.\ 6.4]{ChRo}\label{reflectionequivalence}The complex $\Theta$ induces a self-equivalence of $\Hob(\cA)$ and of $\Db(\cA)$ and induces, by restriction, equivalences $\Hob(\cA_{-m})\mapright{\sim}\Hob(\cA_m)$ and $\Db(\cA_{-m})\mapright{\sim}\Db(\cA_m)$. Furthermore, the map induced by $\Theta$ on $K_0(\cA)$ coincides with the reflection $s$ on $K_0(\cA)$ (viewed as an $\fsl_2$-module).
\end{thm}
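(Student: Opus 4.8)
The statement is a direct quotation of \cite[Thm.\ 6.4]{ChRo}, so the plan is to indicate how one reduces it to results already available in the literature rather than to reconstruct the full Chuang--Rouquier argument. The key point is that the Rickard complex $\Theta$ is built entirely out of the divided-power functors $E^{(\mathrm{sgn},\bullet)}$ and $F^{(1,\bullet)}$ and the counit $\varepsilon$; all the structural input is encoded in the $\fsl_2$-categorification data $(E,F,X,T)$ together with the relations listed in the definition of an $\fsl_2$-categorification, which via the morphism $\gamma_n$ make each $\End(E^k)$ into a quotient of (a degenerate or non-degenerate) affine Hecke algebra $H_k(q_0)$. First I would record that $\Theta_m$ is indeed a complex, i.e.\ $d^{-r}\circ d^{-r-1}=0$; this is \cite[Lemma 6.1]{ChRo} and follows from the observation that two successive applications of $\varepsilon$ factor through $E^{(\mathrm{sgn},\bullet)}$, on which the relevant symmetrizer kills the required term (this uses the $q_0$-Hecke relation for $T$ and the definition of $c^{\mathrm{sgn}}$).

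Next I would explain the strategy for showing $\Theta$ is a self-equivalence. One works one $\fsl_2$-string at a time: by Prop.\ \ref{blocks} the category $\cA$ decomposes as $\bigoplus_m \cA_m$, and it suffices to prove that $\Theta_m\colon \Db(\cA_{-m})\to\Db(\cA_m)$ and $\Theta_{-m}\colon\Db(\cA_m)\to\Db(\cA_{-m})$ are mutually inverse (up to isomorphism of functors). The crucial computation is that the composite $\Theta_{-m}\circ\Theta_m$ is isomorphic to the identity; Chuang--Rouquier prove this by a ``convolution'' argument showing that the double complex obtained from $\Theta_{-m}\Theta_m$ has a contractible subquotient everywhere except in the single spot giving the identity functor. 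The combinatorics here is exactly that of the $\fsl_2$ representation on $K_0$: the functors $E^{(a)}F^{(b)}$ on a string of weights behave like their classes in $U(\fsl_2)$, and the cancellation mirrors the identity $s^2=1$ together with the commutation relations $[e^{(a)},f^{(b)}]=\sum \binom{h-a-b+\cdots}{\cdots}e^{(a-j)}f^{(b-j)}$. To turn these $K_0$-level identities into genuine functor isomorphisms one invokes the categorical $\fsl_2$-commutation relations that follow from the defining axioms, in the precise form established in \cite[\S5--6]{ChRo}; Prop.\ \ref{commutation1} above (the affine Hecke commutation identity) is the algebraic engine behind one of these.

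The last assertion, that $\Theta$ induces the reflection $s$ on $K_0(\cA)$, is comparatively soft: since $K_0(\Db(\cA))\cong K_0(\cA)$ via $[A]\mapsto\sum(-1)^i[H^i(A)]$, the class $[\Theta_m(M)]$ is the alternating sum $\sum_r (-1)^r [E^{(\mathrm{sgn},m+r)}F^{(1,r)}M]$, and in $U(\fsl_2)$ acting on the $(-m)$-weight space this alternating sum is precisely the matrix of $s=\exp(-f)\exp(e)\exp(-f)$ restricted to that weight space. One checks this by the standard $\mathrm{SL}_2$ computation on each finite-dimensional simple string, using that $E^{(a)},F^{(b)}$ categorify $e^{(a)},f^{(b)}$ and that $c^{\mathrm{sgn}},c^{1}$ are idempotent up to a scalar. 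The main obstacle is, as always, the middle step: upgrading the numerical cancellations to an actual homotopy equivalence of complexes of functors. This is where the full strength of the $X$ and $T$ data is used (local nilpotence of $X-a$, the braid and quadratic relations for $T$), and it is the technical heart of \cite{ChRo}; I would cite \cite[Thm.\ 6.4]{ChRo} for it rather than reprove it here, since the present note only needs the statement.
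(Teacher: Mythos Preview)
The paper gives no proof of this theorem at all: it is stated with the attribution \cite[Thm.\ 6.4]{ChRo} and immediately used. Your proposal reaches the same conclusion in its last sentence (cite rather than reprove), so in that sense you agree with the paper.

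That said, the sketch you offer of the Chuang--Rouquier argument is not quite how their proof goes, and if you intend to keep the sketch it is worth correcting. You describe the heart of the proof as a direct analysis of the double complex $\Theta_{-m}\Theta_m$, showing by a convolution/cancellation argument that everything contracts except the identity. That is not the route taken in \cite{ChRo}. Their argument instead proceeds by \emph{reduction to minimal categorifications}: one first shows (\cite[\S5]{ChRo}) that for any simple object $S$ with $FS=0$, the string $\{E^{(i)}S\}_i$ carries the structure of a minimal $\fsl_2$-categorification, equivalent to module categories over explicit quotients of the (degenerate) affine Hecke algebras; on such a minimal categorification $\Theta$ can be computed by hand and shown to be an equivalence. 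One then uses that every object of $\cA$ admits a finite filtration whose subquotients lie in such strings, together with a general lemma that an exact functor between triangulated categories which is an equivalence on each piece of a filtration is an equivalence. The local nilpotence of $X-a$ and the Hecke relations enter in identifying the minimal categorification with the explicit algebra model, not in a global double-complex cancellation. If you prefer to keep an informal outline, I would replace the ``convolution'' paragraph with this reduction-to-strings description; otherwise simply cite the result as the paper does.
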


\section{Quantum $\fgl_n$}
\subsection{}Let $\fgl_n$ denote the Lie algebra of $n\times n$ matrices. Let $\fh$ denote the Lie subalgebra of $\fgl_n$ consisting of diagonal matrices and let $h_i$ be the matrix with $1$ in the $i^{\mbox{th}}$ diagonal entry and $0$ elsewhere. Let $\fh^*=\Hom_{\CC}(\fh,\CC)$. Define $\varepsilon_i\in\fh^*$ by $\langle \varepsilon_i, h_j\rangle=\delta_{ij}$. The \emph{trace form} $(\cdot|\cdot)$ on $\fgl_n$ is given by $(x|y)=\tr(xy)$, where $\tr$ is the ordinary matrix trace. This form is symmetric, ad-invariant and non-degenerate.
\subsection{}
Let $\fn$ be the Lie subalgebra of $\fgl_n$ consisting of strictly upper triangular matrices. By definition, the set of positive roots $R^+$ is the set of the eigenvalues of $\fh$ acting on $\fn$ via the adjoint action. Thus,
$R^+=\{ \varepsilon_i-\varepsilon_j \,|\, 1\leq i < j \leq n\}$.
The restriction of the trace form to $\fh$ is non-degenerate. Hence, we have an isomorphism $\fh\mapright{\sim} \fh^*$ given by $h\mapsto (\cdot|h)$. 
This induces a non-degenerate form, also denoted $(\cdot|\cdot)$, on $\fh^*$. 
This form is given by $(\varepsilon_i|\varepsilon_j)=\delta_{ij}$.
\subsection{}
The weight lattice $P$ is
$P = \{\lambda_1\varepsilon_1 + \cdots + \lambda_n \varepsilon_n \,|\, \lambda_1, \ldots, \lambda_n \in \ZZ\}$.
The cone of dominant weights $P^+$ is
\[P^+ = \{\lambda_1\varepsilon_1 + \cdots + \lambda_n\varepsilon_n \,|\, \lambda_1\geq \lambda_2 \ge \cdots \ge \lambda_n, \, \lambda_1,\ldots, \lambda_n \in \ZZ\}.\]
\subsection{}
The Weyl group $W_0$ (=the symmetric group $S_n$ in the case of $\fgl_n$) acts on $\fh^*$ by permuting $\varepsilon_1,\ldots, \varepsilon_n$. Set $\rho = \sum_{i=1}^{n-1}(n-i)\varepsilon_i$.
The \emph{dot action} of $W_0$ on $\fh^*$ is given by $w\cdot \lambda = w(\lambda+\rho) - \rho$, $w\in W_0$, $\lambda\in\fh^*$.

\subsection{}
The algebra $U'_q=U'_q(\fgl_n)$ is the $\QQ[q,q^{-1}]$ algebra with generators $E_1, \ldots, E_{n-1}$, $K_1^{\pm 1}, \ldots, K_{n-1}^{\pm 1}$, $L_1^{\pm 1},\ldots, L_n^{\pm 1}$, such that $L_1L_2\cdots L_n$ is a central element, and subject to the following relations:
\begin{align*}
L_i E_j &= q^{(\varepsilon_j -\varepsilon_{j+1}|\varepsilon_i)} E_jL_i, \\
L_i F_j &= q^{-(\varepsilon_j-\varepsilon_{j+1}|\varepsilon_i)} F_jL_i, \\
L_iL_i^{-1}&=1=L_i^{-1}L_i, \\
K_iK_i^{-1}&=1=K_i^{-1}K_i, \\
K_i&= L_iL_{i+1}^{-1}, \\
E_iF_j&=F_jE_i \qquad \mbox{if $|i-j|>1$}; \\
E_iF_i &= F_iE_i + \frac{K_i-K_i^{-1}}{q-q^{-1}},
\end{align*}
\[
E_i^2E_{i\pm 1}-(q+q^{-1})E_iE_{i\pm 1}E_i + E_{i\pm 1}E_i^2 = 0,\]
\[F_i^2F_{i\pm 1}-(q+q^{-1})F_iF_{i\pm 1}F_i + F_{i\pm 1}F_i^2 = 0.
\]
\subsection{}
The algebra $U'_q$ is a Hopf-algebra with coproduct $\Delta$, antipode $S$ and counit $\varepsilon$, given by
\[ \Delta(E_i) = E_i\otimes 1 + K_i \otimes E_i, \qquad \Delta(F_i) = F_i\otimes K^{-1}_i + 1 \otimes F_i, \qquad \Delta(L_i)=L_i\otimes L_i; \]
\[ S(E_i) = -K^{-1}_iE_i, \qquad S(F_i)=-F_iK_i, \qquad S(L_i)=L_i^{-1}; \]
\[\varepsilon(E)=\varepsilon(F)=0, \qquad \varepsilon(L_i)=1. \]
\subsection{}
Let $\e$ denote the $U_q$-module defined by
\[ \e=U_q/\ker(\varepsilon\colon U_q\to\CC).\]
Further, for $U'_q$-modules $X,Y$, endow $X\otimes Y$ with the structure of a $U'_q$-module via the coproduct. This endows the category of $U'_q$-modules with a monoidal structure.

\subsection{}Denote by $U_q$ the $\ZZ[q,q^{-1}]$-Hopf subalgebra of $U'_q$ generated by the divided powers
\[ E_i^{(m)} = \frac{E_i^m}{[m]!}, \qquad F_i^{(m)} = \frac{F_i^m}{[m]!}, \qquad m\in\ZZ_{\geq 0}, \]
together with
\[ L_i^{\pm 1} \qquad \mbox{and} \qquad {K_i; c \brack m} = \prod_{j=1}^m \frac{K_iq^{(c-j+1)}-K^{-1}q^{-(c-j+1)}}{q^j-q^{-j}}, \qquad m\in \ZZ_{\geq 0}, c\in\ZZ. \]
Let $U^+_q$ denote the subalgebra of $U_q$ generated by the $E_i^{(m)}$ and denote by $U^-_q$ the subalgebra of $U_q$ generated by the $F_i^{(m)}$. Let $U_q^0$ denote the subalgebra of $U_q$ generated by the $L_i^{\pm 1}$ and the ${K_i; c \brack m}$. Then $U_q$ has the so called triangular decomposition $U_q = U_q^-U_q^0U_q^+$.

\subsection{}Let $V$ be a $U_q$-module, $\gamma$ an anti-endomorphism of $U_q$. Then we may give $\Hom_{\CC}(V,\e)$ the structure of a $U_q$-module via
\[ x\cdot f = \langle f,\gamma(x) - \rangle, \qquad x\in U_q, f\in \Hom_{\CC}(V, \CC). \]
If $\gamma=S$, the antipode, then the resulting module is denote $V^{*}$. Taking $\gamma=S^{-1}$, the resulting module is denoted $V^{\circledast}$.
\subsection{}
Assume $V$ is finite dimensional. Define maps
\[
\varepsilon_V\colon V^*\otimes V \to \e, \quad f\otimes v\mapsto \langle f, v\rangle; \qquad
\eta_V\colon\e\to V\otimes V^*, \quad 1\mapsto \sum_i v_i\otimes v_i^*,
\]
where $\{v_i\}_i$ and $\{v_i^*\}_i$ are dual bases of $V$ and $V^*$ respectively. These maps are $U_q$-module homomorphisms. It is evident that they satisfy \eqref{dualityunit} and \eqref{dualitycounit}.
Similarly, define 
\[
\varepsilon'_V\colon V \otimes V^{\circledast} \to \e, \quad v\otimes f\mapsto \langle f,v\rangle; \qquad
\eta'_V\colon \e\to V^{\circledast}\otimes V, \quad 1\mapsto\sum_i v_i^*\otimes v_i.
\]
These maps are also $U_q$-module homomorphisms and this data satisfies the properties required for right duals.
Thus:
\begin{prop}If $V$ is a finite dimensional module then $V^*$ is a left dual to $V$ and $V^{\circledast}$ is a right dual to $V$.
\end{prop}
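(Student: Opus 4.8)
The plan is to verify directly that the maps $\varepsilon_V,\eta_V$ (respectively $\varepsilon'_V,\eta'_V$) are morphisms of $U_q$-modules and then that the two triangle identities \eqref{dualityunit} and \eqref{dualitycounit} hold; the statement then follows from the definition in \S\ref{ssdeffduals}. Since the second claim about $V^{\circledast}$ is entirely parallel to the first (replace $S$ by $S^{-1}$ and swap the tensor factors throughout), I would only write out the argument for $V^*$ and leave the $V^{\circledast}$ case to the reader, or dispatch it by the symmetry $S \leftrightarrow S^{-1}$.

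First I would fix a basis $\{v_i\}$ of $V$ with dual basis $\{v_i^*\}$, so that $\eta_V(1)=\sum_i v_i\otimes v_i^*$ is the canonical element of $V\otimes V^*\cong\End_{\CC}(V)$, independent of the choice of basis. To check that $\varepsilon_V$ is $U_q$-linear, one uses the module structure on $V^*$, namely $\langle x\cdot f,v\rangle=\langle f,S(x)v\rangle$, together with the coproduct $\Delta(x)=\sum x_{(1)}\otimes x_{(2)}$ and the antipode axiom $\sum S(x_{(1)})x_{(2)}=\varepsilon(x)$: then
$\varepsilon_V(x\cdot(f\otimes v))=\sum\langle x_{(1)}\cdot f, x_{(2)}v\rangle=\sum\langle f, S(x_{(1)})x_{(2)}v\rangle=\varepsilon(x)\langle f,v\rangle$,
which is exactly the action of $x$ on $\e$. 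For $\eta_V$ one instead invokes the other antipode axiom $\sum x_{(1)}S(x_{(2)})=\varepsilon(x)$ and the fact that $\sum_i v_i\otimes v_i^*$ is a trace-like tensor: applying $\Delta(x)$ to it and using $x\cdot v_i^* = \langle v_i^*, S(x)-\rangle$ one finds $x\cdot\eta_V(1)=\varepsilon(x)\eta_V(1)$. It is cleanest to phrase both computations in Sweedler notation so that they read off the Hopf-algebra axioms directly, rather than wrestling with the explicit generators $E_i,F_i,L_i$; since $U_q$ is generated by these (and their divided powers), linearity on generators suffices, but the Sweedler-notation argument handles all elements at once.

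Next I would verify the triangle identities. Identity \eqref{dualityunit} asks that the composite $V\xrightarrow{\eta_V\otimes\id}V\otimes V^*\otimes V\xrightarrow{\id\otimes\varepsilon_V}V$ is the identity: tracing $v_j$ through, $\eta_V\otimes\id$ sends it to $\sum_i v_i\otimes v_i^*\otimes v_j$, and then $\id\otimes\varepsilon_V$ sends this to $\sum_i v_i\langle v_i^*,v_j\rangle=v_j$. Identity \eqref{dualitycounit} is the analogous bookkeeping with $V^*=V^*\otimes\e\xrightarrow{\id\otimes\eta_V}V^*\otimes V\otimes V^*\xrightarrow{\varepsilon_V\otimes\id}\e\otimes V^*=V^*$, giving for $v_j^*$ the element $\sum_i\langle v_j^*,v_i\rangle v_i^*=v_j^*$. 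These are purely combinatorial and use only the duality of the chosen bases, not the $U_q$-structure.

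The only genuine subtlety — the step I expect to be the main (though still modest) obstacle — is bookkeeping the antipode correctly: the quantum group is not cocommutative, $S$ is an \emph{anti}-automorphism, and the coproducts of $E_i$ and $F_i$ are asymmetric (one has $K_i$ on the left factor, the other $K_i^{-1}$ on the right), so one must be careful that the module structures $V^*$ (via $S$) and $V^{\circledast}$ (via $S^{-1}$) are the ones that make $\varepsilon_V,\eta_V$ land on the correct side of the adjunction, and that no stray $K_i$-twist survives. Checking finitely many generators — it suffices to test $U_q$-linearity on $E_i$, $F_i$, $L_i^{\pm1}$ — makes this explicit and finite; alternatively the Sweedler-notation computation above bypasses the asymmetry entirely by appealing only to the two antipode axioms. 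Finite-dimensionality of $V$ is used precisely to guarantee that $\eta_V(1)=\sum_i v_i\otimes v_i^*$ is a well-defined element of $V\otimes V^*$ (the sum is finite), which is what makes the whole argument go through.
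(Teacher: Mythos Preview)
Your proposal is correct and follows exactly the approach of the paper: the paper defines the maps $\varepsilon_V,\eta_V,\varepsilon'_V,\eta'_V$ in the paragraph immediately preceding the proposition, asserts that they are $U_q$-module homomorphisms and that the triangle identities \eqref{dualityunit}, \eqref{dualitycounit} are ``evident'', and then states the proposition as an immediate consequence. Your write-up simply fills in the standard Hopf-algebra verification (via Sweedler notation and the antipode axioms) and the elementary basis chase for the triangle identities that the paper leaves implicit.
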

\subsection{}Set $K_{\rho} = L_1^{n-1}L_2^{n-2}\cdots L_{n-1}$,
then it follows from the defining relations for $U_q$ that $K^2_{\rho}S(x)=S^{-1}(x)K^2_{\rho}$.
\begin{prop}\label{dualsiso}Let $V$ be a finite dimensional $U_q$-module. The map
\[
\varphi_V\colon V^*\to V^{\circledast}, \qquad
f\mapsto \langle f, K^{-2}_{\rho} - \rangle
\]
is a $U_q$-module isomorphism.
\end{prop}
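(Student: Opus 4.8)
The plan is to exploit the fact that $V^*$ and $V^{\circledast}$ have the \emph{same} underlying $\CC$-vector space, namely $\Hom_{\CC}(V,\CC)$; only the $U_q$-action differs (via $S$ for $V^*$, via $S^{-1}$ for $V^{\circledast}$). Under this identification $\varphi_V$ is simply the linear endomorphism $f\mapsto f\circ\rho$, where $\rho$ denotes the action of $K^{-2}_{\rho}$ on $V$. Since $K_{\rho}=L_1^{n-1}L_2^{n-2}\cdots L_{n-1}$ is a product of the invertible generators $L_i^{\pm 1}$, the element $K^{-2}_{\rho}$ is invertible in $U_q$ and hence acts invertibly on the finite-dimensional module $V$; therefore $\varphi_V$ is a bijection of vector spaces, with inverse $g\mapsto \langle g, K^2_{\rho}-\rangle$. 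So the whole content is the verification that $\varphi_V$ intertwines the two actions.

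For that I would fix $x\in U_q$, $f\in V^*$, $v\in V$ and unwind the definitions. On one hand
\[ \varphi_V(x\cdot f)(v)=(x\cdot f)(K^{-2}_{\rho}v)=\langle f, S(x)K^{-2}_{\rho}v\rangle, \]
and on the other hand
\[ (x\cdot\varphi_V(f))(v)=\varphi_V(f)(S^{-1}(x)v)=\langle f, K^{-2}_{\rho}S^{-1}(x)v\rangle. \]
Hence $\varphi_V$ is $U_q$-linear precisely when $S(x)K^{-2}_{\rho}=K^{-2}_{\rho}S^{-1}(x)$ holds for the action on $V$, and this is already an identity in $U_q$: it is obtained from the relation $K^2_{\rho}S(x)=S^{-1}(x)K^2_{\rho}$ recorded just above the proposition by multiplying on the left and on the right by $K^{-2}_{\rho}$. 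Combined with the bijectivity noted above, this shows $\varphi_V$ is a $U_q$-module isomorphism.

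The one genuinely substantive ingredient is the identity $K^2_{\rho}S(x)=S^{-1}(x)K^2_{\rho}$ — equivalently, that $S^2$ is implemented by conjugation by $K^2_{\rho}$ — but since this has already been deduced from the defining relations of $U_q$, the argument reduces to the formal bookkeeping above. Were one to want a self-contained proof, the only real labor would be checking that identity on the generators $E_i$, $F_i$, $L_i^{\pm 1}$ (using that $K_{\rho}$ realizes the shift by (twice) the half-sum of positive roots); this is routine but slightly tedious, and it is the sole place where any computation is needed. Bijectivity and the interchange of the two module structures are immediate.
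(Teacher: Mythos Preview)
Your proof is correct and follows essentially the same route as the paper's own argument: both compute $\varphi_V(xf)=\langle f,S(x)K^{-2}_{\rho}-\rangle=\langle f,K^{-2}_{\rho}S^{-1}(x)-\rangle=x\varphi_V(f)$ using the identity $K^2_{\rho}S(x)=S^{-1}(x)K^2_{\rho}$ recorded just before the proposition, and then note that bijectivity is clear from the invertibility of $K_{\rho}$. Your version is simply more explicit about why bijectivity holds and about which identity is being invoked.
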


\begin{proof}Let $f\in V^*$ and let $x\in U_q$, then
\[ \varphi_V(xf) = \langle xf, K^{-2}_{\rho} - \rangle = \langle f, S(x)K^{-2}_{\rho}-\rangle = \langle f, K^{-2}_{\rho}S^{-1}(x)-\rangle = x\varphi_V(f). \]
Thus, $\varphi$ is a $U_q$-module homomorphism. That it is an isomorphism is clear.
\end{proof}

\subsection{}Let $M$ be a $U_q$-module and let $\lambda\in P$. The weight space $M_{\lambda}$ is
\[ M_{\lambda} = \left\{ v\in M\,|\, L_iv = q^{(\lambda|\varepsilon_i)}v,\, {K_i; 0 \brack m}v = {(\lambda|\varepsilon_i) \brack m }v, \mbox{ for all $i$ and $m$}\right\}. \]
The direct sum $\bigoplus_{\lambda\in P} M_{\lambda}$ is a $U_q$-submodule of $M$. It follows from the defining relations that $E_i^{(m)}M_{\lambda} \subseteq M_{\lambda+m(\varepsilon_i-\varepsilon_{i+1})}$ and that $F_i^{(m)}M_{\lambda}\subseteq M_{\lambda-m(\varepsilon_i-\varepsilon_{i+1})}$. A highest weight vector of weight $\lambda$ is a vector $v\in M_{\lambda}$ such that $E_i^{(m)}v = 0$ for all $i$ and $m$. A module $M$ is a highest weight module if it is generated by a highest weight vector.
\subsection{}
Let $\cO$ be the full subcategory of $U_q$-modules $M$ satisfying the following properties:
\begin{itemize}
\item For each $v\in M$, the subspace $U_q^+ v \subset M$ is finite dimensional;
\item For each $\lambda\in P$ the subspace $M_{\lambda}$ is finite dimensional and $M=\bigoplus_{\lambda\in P} M_{\lambda}$;
\item $M$ is finitely generated as a $U_q$-module.
\end{itemize}
Let $M\in\cO$, then $M$ has a finite filtration $0 = M_0 \subset M_1 \subset \cdots \subset M_k = M$
such that each $M_i/M_{i-1}$ is a highest weight module.
\subsection{}
Let $\lambda\in P$ and let $J_{\lambda}$ be the left ideal of $U_q$ generated by
\[ E_i^{(m)}, \qquad L_i-q^{(\lambda|\varepsilon_i)}, \qquad {K_i;0\brack m}-{(\lambda|\varepsilon_i-\varepsilon_{i+1}) \brack m}, \]
for all $i$ and $m$. The Verma module $M(\lambda)$ is
\[ M(\lambda) = U_q/J_{\lambda}. \]
The module $M(\lambda)$ is a highest weight module of weight $\lambda$. In particular, $M(\lambda)\in\cO$. The Verma module $M(\lambda)$ has a unique simple quotient denoted $L(\lambda)$. If $\lambda\in P^+$ then $L(\lambda)$ is finite dimensional. The classes $[M(\lambda)]$, $\lambda\in P$ constitute a basis of $K_0(\cO)$.

\subsection{}Grade the subalgebras $U_q^+$ and $U_q^-$ via elements $\nu=(\nu_1,\ldots, \nu_n)\in \ZZ^n_{\geq 0}$ as follows: the subspace $U_{\nu}^+$ is generated by all the products of $E^{(m_{ij})}_i$s in which for fixed $i$, $\sum_j m_{ij} = \nu_i$. Define $U^-_{\nu}$ similarly.
Let $V, W\in \cO$, define an invertible linear map $\Pi\colon V\otimes W \to V\otimes W$ by $v\otimes w\mapsto q^{-(\lambda|\mu)}w\otimes v$, for $v\in V_{\lambda}$ and $w\in W_{\mu}$.

\begin{prop}[{\cite[Prop.\ 4.1]{Dr}, \cite[Thm.\ 32.1.5]{Lu}, \cite{Ji}}]\label{rmatrixexists}There exists a unique family of elements $P_{\nu}\in U_{\nu}^- \otimes U_{\nu}^+$, $\nu\in\ZZ^n_{\geq 0}$, such that $P_0=1\otimes 1$ and the map
\[
\cR_{VW}\colon V\otimes W \to W\otimes V,\qquad
v\otimes w \mapsto \Pi^{-1}\left( \sum_{\nu\in\ZZ^n_{\geq 0}}P_{\nu}(v\otimes w) \right), \]
is an isomorphism for every $V,W\in\cO$.
\end{prop}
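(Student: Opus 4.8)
The plan is to establish existence first and then uniqueness, following the quasi-triangular Hopf algebra formalism (as in Drinfeld, Lusztig, or Jimbo). The underlying fact is that the quantized enveloping algebra $U_q'$ admits a universal $R$-matrix $\mathcal{R} = \Pi \cdot \Theta$, where $\Theta = \sum_\nu \Theta_\nu$ is the "quasi-$R$-matrix" living in a completion of $U_q^- \otimes U_q^+$, with $\Theta_\nu \in U_\nu^- \otimes U_\nu^+$ and $\Theta_0 = 1 \otimes 1$. First I would recall the construction of $\Theta$: it is the canonical element dual to the non-degenerate pairing between $U_q^-$ and $U_q^+$ (equivalently, it is characterized by the intertwining identity $\Theta \cdot \Delta(x) = \bar\Delta(x) \cdot \Theta$ for all $x \in U_q$, where $\bar\Delta$ is the opposite-type coproduct obtained from $\Delta$ by the relevant bar-involution twist). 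On any pair of objects $V, W \in \cO$ the sum $\sum_\nu \Theta_\nu(v \otimes w)$ is finite: since $V$ has finite-dimensional weight spaces bounded above in the dominance order and $W$ likewise, for fixed homogeneous $v, w$ only finitely many $\nu$ contribute because $U_\nu^+$ raises weight by $\sum \nu_i(\varepsilon_i - \varepsilon_{i+1})$ out of a bounded range. Thus $\mathcal{R}_{VW}$ is a well-defined linear map.

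Next I would check that $\mathcal{R}_{VW}$ is a $U_q$-module homomorphism $V \otimes W \to W \otimes V$ and is invertible. For the homomorphism property: $\Pi$ conjugates $\Delta$ into the opposite coproduct up to the weight-twist factor $q^{-(\lambda|\mu)}$, and $\Theta$ supplies exactly the correction that makes the composite intertwine $\Delta$ with the genuine flipped action; this is the standard verification and reduces to checking it on the generators $E_i, F_i, L_i$ using the explicit low-order terms $\Theta_0 = 1\otimes 1$ and $\Theta_{e_i - e_{i+1}} = (q - q^{-1}) F_i \otimes E_i$ (up to normalization). For invertibility, one exhibits the inverse using the bar-involution: $\bar\Theta = \sum_\nu \bar\Theta_\nu$ satisfies $\Theta \cdot \bar\Theta' = 1 \otimes 1$ in the appropriate completion (where $'$ denotes the flip), giving a two-sided inverse to $\mathcal{R}_{VW}$ that is again a well-defined morphism on $\cO$ by the same finiteness argument. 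Since all modules in $\cO$ are built from highest weight modules, it suffices to have these identities hold universally (in the completion of $U_q^- \otimes U_q^+$), and then specialize.

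Finally, uniqueness. Suppose $P_\nu$ and $P_\nu'$ are two such families. Applying $\mathcal{R}_{VW}$ and $\mathcal{R}_{VW}'$ to $V = W = M(\lambda)$ a Verma module with highest weight vector $v_\lambda$, and evaluating on $v_\lambda \otimes v_\lambda$, forces $P_0 = P_0' = 1 \otimes 1$ to agree; then inductively on the height $|\nu| = \sum \nu_i$, using Verma modules of various highest weights and the fact that the pairing $U_\nu^- \otimes U_\nu^+ \to \QQ(q)$ is non-degenerate, one shows $P_\nu = P_\nu'$. Concretely, the intertwining constraint $\mathcal{R}_{VW}\circ \Delta(x) = \bar\Delta(x)\circ\mathcal{R}_{VW}$ applied on generators determines $\Theta_\nu$ recursively from $\Theta_{\nu'}$ with $|\nu'| < |\nu|$, and this recursion has at most one solution.

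I expect the main obstacle to be bookkeeping rather than conceptual: carefully justifying that all the identities, which a priori live in a completed tensor product $U_q^- \widehat{\otimes}\, U_q^+$, descend to honest equalities of morphisms on every object of $\cO$ (this is exactly where the defining conditions of $\cO$ — finitely generated, locally $U_q^+$-finite, finite-dimensional weight spaces — are used to guarantee the relevant sums are finite), and tracking the weight-twist factor $q^{-(\lambda|\mu)}$ consistently through the coproduct computations with the $\fgl_n$ (rather than $\fsl_n$) conventions for the $L_i$. Since this is a known result cited to \cite{Dr}, \cite{Lu}, \cite{Ji}, I would simply assemble these ingredients and refer to those sources for the verifications at generator level.
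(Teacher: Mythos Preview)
The paper does not supply a proof of this proposition at all: it is stated with citations to \cite{Dr}, \cite{Lu}, \cite{Ji} and immediately followed only by remarks on conventions and well-definedness. Your outline is essentially the standard argument from those references (in particular Lusztig's quasi-$R$-matrix $\Theta$ and the intertwining identity with $\bar\Delta$), so there is nothing to compare---your write-up simply expands what the paper leaves to the literature.
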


\begin{remarks}$\,$
\begin{enumerate}
\item Our $(\cdot|\cdot)$ is $f$ in \cite{Lu} and our $\cR_{VW}$ is ${}_f\cR_{WV}^{-1}$ in \cite{Lu}.
\item For $V\in\cO$ and $v\in V$, $E_i^{(m)}v=0$ for large enough $m$. Thus, even though the sum $\sum_{\nu\in\ZZ^n_{\geq 0}}P_{\nu}$ is infinite, this is a well defined operator on $V\otimes -$.
\end{enumerate}
 \end{remarks}
\subsection{} 
Let $f\colon V\to V'$ and $g\colon W\to W'$ in $\cO$. Then it is clear that the diagram
\begin{equation*}
					\xymatrixrowsep{1.5pc}\xymatrixcolsep{2pc}\xymatrix{ V\otimes W\ar[d]_{\cR_{VW}}\ar[r]^{f\otimes g}& V'\otimes W'\ar[d]^{\cR_{V'W'}} \\
					W\otimes V\ar[r]_{g\otimes f}&W'\otimes V'}
				\end{equation*}
				commutes. Moreover:
				\begin{prop}\label{RmatrixO}\cite[Prop.\ 32.2.4]{Lu}The following diagrams commute for all $U,V,W\in\cO$
		\begin{equation*}\xymatrixrowsep{1.5pc}\xymatrixcolsep{1pc}\xymatrix{
&U\otimes V\otimes W\ar@/^0pc/[ld]_{\id\otimes \cR_{VW}}\ar@/^0pc/[rd]^{\cR_{(U\otimes V)W}} \\
U\otimes W\otimes V \ar[rr]_{\cR_{UW}\otimes\id}&&W\otimes U\otimes V
}\end{equation*}
\begin{equation*}\xymatrixrowsep{1.5pc}\xymatrixcolsep{1pc}\xymatrix{
&U\otimes V\otimes W\ar@/0pc/[ld]_{\cR_{UV}\otimes\id}\ar@/0pc/[rd]^{\cR_{U(V\otimes W)}}\\
V\otimes U\otimes W \ar[rr]_{\id\otimes\cR_{UW}}&&V\otimes W\otimes U. 
}\end{equation*}
\end{prop}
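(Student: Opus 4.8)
The plan is to reduce both diagrams to a single algebraic fact about $U_{q}$, the compatibility of the quasi-$R$-matrix $\sum_{\nu}P_{\nu}$ with the coproduct, which is the form in which \cite[Prop.\ 32.2.4]{Lu} records this statement. As a preliminary I would check that $U\otimes V\in\cO$ whenever $U,V\in\cO$ --- the weight spaces $(U\otimes V)_{\eta}=\bigoplus_{\lambda+\mu=\eta}U_{\lambda}\otimes V_{\mu}$ are finite dimensional, $U_{q}^{+}$ acts locally finitely through $\Delta$, and finite generation is inherited --- so that $\cR_{(U\otimes V)W}$ and $\cR_{U(V\otimes W)}$ are defined, and so that (as noted in the remarks following Proposition~\ref{rmatrixexists}) the a priori infinite operators $\sum_{\nu}P_{\nu}$ act on all the triple tensor products appearing in the two diagrams.

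The substance of the argument is then to separate the Cartan twist from the unipotent part. Writing $\cR_{VW}$ as the composite of $\Pi^{-1}$ with the operator on $V\otimes W$ obtained by letting $\sum_{\nu}P_{\nu}\in U_{q}^{-}\otimes U_{q}^{+}$ act through $\Delta$ on the first tensor factor, a direct computation on weight vectors shows that the $q$-power built into $\Pi$ is additive in the weights and hence multiplicative across tensor factors: on $U_{\kappa}\otimes V_{\lambda}\otimes W_{\mu}$ the twist in $\Pi_{(U\otimes V)W}$ factors as the twist in $\Pi_{UW}$ on legs $1,3$ times the twist in $\Pi_{VW}$ on legs $2,3$, and symmetrically for $\Pi_{U(V\otimes W)}$. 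Cancelling these twists, commutativity of the first diagram becomes the identity $(\Delta\otimes\id)\bigl(\sum_{\nu}P_{\nu}\bigr)=\bigl(\sum_{\alpha}P_{\alpha}\bigr)_{13}\bigl(\sum_{\beta}P_{\beta}\bigr)_{23}$, and commutativity of the second becomes $(\id\otimes\Delta)\bigl(\sum_{\nu}P_{\nu}\bigr)=\bigl(\sum_{\alpha}P_{\alpha}\bigr)_{13}\bigl(\sum_{\beta}P_{\beta}\bigr)_{12}$, both read off on $U\otimes V\otimes W$; these are precisely the quasi-triangularity relations proved in \cite[Prop.\ 32.2.4]{Lu} (with all sums finite on objects of $\cO$, so no completion subtlety intervenes). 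Matching conventions via the remarks after Proposition~\ref{rmatrixexists}, under which our $\cR_{VW}$ corresponds to ${}_{f}\cR_{WV}^{-1}$ of \emph{loc. cit.}, then delivers the two diagrams as stated.

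I would keep in reserve a more self-contained variant that avoids completions of $U_{q}^{\otimes 3}$ altogether: each of the four maps in the two diagrams is a $U_{q}$-module isomorphism (the individual $\cR$'s are $U_{q}$-module maps, and tensoring with identity functors is compatible with $\Delta$ by coassociativity), and each differs from $\Pi^{-1}$ by an operator that strictly lowers the weight in the original first tensor factor; since the braiding on $\cO$ with this normalization is unique --- which is exactly the uniqueness clause of Proposition~\ref{rmatrixexists} --- the two sides of each equality coincide. In either approach the one genuinely fiddly point is the bookkeeping for $\Pi$: checking that the weight-dependent $q$-powers reassemble correctly through iterated tensor products. Everything else is the quasi-triangularity of $U_{q}$, which is why it is most economical to quote \cite{Lu}.
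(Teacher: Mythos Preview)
The paper does not give its own proof of this proposition: it is stated as a citation of \cite[Prop.\ 32.2.4]{Lu} and nothing more. So there is no argument in the paper to compare your proposal against.

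That said, your sketch is a reasonable unpacking of how the two triangle identities follow from the quasi-triangularity relations in \cite{Lu}. The one place I would tighten is the ``self-contained variant'': the uniqueness clause in Proposition~\ref{rmatrixexists} is uniqueness of the family $\{P_\nu\}$ subject to $P_0=1\otimes 1$ and the requirement that $\cR_{VW}$ be a $U_q$-isomorphism for all $V,W\in\cO$; it is not obviously a uniqueness statement for an arbitrary $U_q$-isomorphism $U\otimes V\otimes W\to W\otimes U\otimes V$ that differs from $\Pi^{-1}$ by weight-lowering terms, so invoking it to conclude the triangles commute would need an extra step (e.g.\ rewriting each composite as $\Pi^{-1}$ times the action of some explicit family in $U_\nu^-\otimes U_\nu^+$ and then appealing to uniqueness of that family). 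Your first approach, reducing directly to $(\Delta\otimes\id)\sum P_\nu$ and $(\id\otimes\Delta)\sum P_\nu$, is cleaner and is exactly what \cite[Prop.\ 32.2.4]{Lu} supplies.
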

\subsection{}
Define a linear map
\[
\varphi\colon U_q \otimes U_q \to U_q, \qquad
x\otimes y\mapsto S(y)x,
\]
where $S$ is the antipode.
Let $M\in\cO$. Let $P_{\nu}, \nu\in\ZZ^{n}_{\geq 0}$ be as in Prop.\ \ref{rmatrixexists}. Define
\[
\theta_M\colon M \to M, \qquad
v\mapsto q^{(\lambda|\lambda+2\rho)}\sum_{\nu\in\ZZ^n_{\geq 0}} \varphi(P_{\nu}) v.
\]
This is a well defined linear operator and by \cite[Prop.\ 6.1.7]{Lu}, $\theta_M$ is a $U_q$-module automorphism. Now suppose $M$ is generated by a highest weight vector $v_{\lambda}^+\in M_{\lambda}$. Then by definition of the $P_{\nu}$,
\[ \theta_M(v_{\lambda}^+) = q^{(\lambda|\lambda+2\rho)}v_{\lambda}^+ + \mbox{terms of lower weight}. \]
As $\End_{U_q}(M)=\CC$, it follows that $\theta_M$ is multiplication by $q^{(\lambda|\lambda+2\rho)}$.

\begin{prop}\label{ribbonO}Let $V,W\in\cO$. Then
$\theta_{V\otimes W} \circ (\theta_V\otimes \theta_W)^{-1} =\cR_{WV}\circ \cR_{VW}$.
\end{prop}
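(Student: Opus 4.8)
The plan is to recognise that both sides of the asserted identity are $U_q$-module endomorphisms of $V\otimes W$ — the left-hand one because each $\theta$ is a $U_q$-module automorphism, the right-hand one because each $\cR$ is a $U_q$-module isomorphism by Prop.~\ref{rmatrixexists} — and that both are natural in the pair $(V,W)$ (for $\theta$ this is immediate from the uniform formula defining it, for $\cR$ it is the naturality square preceding Prop.~\ref{RmatrixO}). The content of the proposition is concentrated in a computation on one vector, which I would carry out for $V=M(\lambda)$ and $W=M(\mu)$ a pair of Verma modules and on the highest weight vector $v_\lambda^+\otimes v_\mu^+$. Because $v_\mu^+$ and $v_\lambda^+$ are highest weight vectors, the $U_\nu^+$-leg of $P_\nu$ annihilates them for $\nu\neq 0$, so only $P_0=1\otimes 1$ contributes and each of $\cR_{M(\lambda),M(\mu)}$, $\cR_{M(\mu),M(\lambda)}$ acts on $v_\lambda^+\otimes v_\mu^+$, resp.\ $v_\mu^+\otimes v_\lambda^+$, simply as $\Pi^{-1}$; unwinding the definition of $\Pi$ yields $\cR_{WV}\circ\cR_{VW}(v_\lambda^+\otimes v_\mu^+)=q^{2(\lambda|\mu)}(v_\lambda^+\otimes v_\mu^+)$. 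For the left-hand side, $\theta_V$ and $\theta_W$ act by the scalars $q^{(\lambda|\lambda+2\rho)}$ and $q^{(\mu|\mu+2\rho)}$ since $M(\lambda)$ and $M(\mu)$ are highest weight modules, while $v_\lambda^+\otimes v_\mu^+$ is itself a highest weight vector of $V\otimes W$ of weight $\lambda+\mu$ (the coproduct formulas for the $E_i^{(m)}$ show it is annihilated by $U^+_q$), so by naturality $\theta_{V\otimes W}$ acts on it by $q^{(\lambda+\mu|\lambda+\mu+2\rho)}$; and $(\lambda+\mu|\lambda+\mu+2\rho)-(\lambda|\lambda+2\rho)-(\mu|\mu+2\rho)=2(\lambda|\mu)$. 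Thus both sides act on $v_\lambda^+\otimes v_\mu^+$ by $q^{2(\lambda|\mu)}$.

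It then remains to propagate this from the highest weight line to all of $V\otimes W$, and this is the main obstacle: naturality alone does not suffice, since a tensor product of Verma modules is not generated by its highest weight vector. The way I would finish is to observe that both sides are in fact \emph{universal} operators, assembled from the quasi-$R$-matrix $\sum_\nu P_\nu$ and the diagonal $q^{(\cdot\,|\,\cdot)}$-factors concealed in $\Pi$ and in the scalars $q^{(\lambda|\lambda+2\rho)}$. Concretely, $\theta$ is the action on objects of $\cO$ of a central element $\mathbf{v}$ of a suitable completion of $U_q$, so that $\theta_{V\otimes W}\circ(\theta_V\otimes\theta_W)^{-1}$ is the action of $\Delta(\mathbf{v})(\mathbf{v}\otimes\mathbf{v})^{-1}$; meanwhile $\cR_{WV}\circ\cR_{VW}$ is the action on $V\otimes W$ of the square of the universal $R$-matrix $\Pi^{-1}\circ\sum_\nu P_\nu$. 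The equality of these two operators is the standard `ribbon' relation, which one extracts from the quasi-$R$-matrix identities of \cite[\S4]{Lu} (equivalently, from the hexagon diagrams of Prop.~\ref{RmatrixO}) together with the centrality of $\mathbf{v}$; the one scalar it leaves undetermined is fixed by the highest weight computation above.

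I expect the delicate point to be the bookkeeping in this last step — identifying $\cR_{WV}\circ\cR_{VW}$ and $\theta_{V\otimes W}\circ(\theta_V\otimes\theta_W)^{-1}$ with the asserted universal operators, and keeping careful track of the flips and of the diagonal $q$-factors hidden inside $\Pi$ and inside $q^{(\lambda|\lambda+2\rho)}$. Once the conventions are pinned down, the remaining algebra is Lusztig's.
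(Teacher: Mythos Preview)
Your proposal is correct and, once you reach your second paragraph, coincides with the paper's own argument: the paper simply remarks that $\Pi^{-1}\circ\sum_\nu P_\nu$ may be viewed as an element of a completion of $U_q\otimes U_q$ and then invokes \cite[Prop.~3.2]{Dr} for the ribbon identity, which is precisely the ``universal operator'' step you sketch. Your opening highest-weight computation on $v_\lambda^+\otimes v_\mu^+$ is a reasonable sanity check but is not needed --- Drinfeld's argument produces the identity exactly, with no residual scalar to pin down --- so you can drop the first paragraph and the remark about an ``undetermined scalar''.
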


\begin{proof}The operator $\Pi^{-1} \circ \sum_{\nu\in\ZZ^n_{\geq 0}} P_{\nu}$ may be interpreted as an element of an appropriate completion of $U_q\otimes U_q$ (see \cite[\S4.1]{Lu}). Now the proof of our assertion is exactly the same as that of \cite[Prop.\ 3.2]{Dr}.
\end{proof}

\section{$\fsl_2$-categorifications and derived equivalences for generic $q$}\label{s:generic}
In this section we assume $q$ is not a root of $1$.
Let $V=L(\varepsilon_1)$. Then $V$ has weights $\varepsilon_1, \ldots, \varepsilon_n$ and $V^*$ has weights $-\varepsilon_1, \ldots, -\varepsilon_n$. It is clear that $V, V^*\in \cO$.

\begin{prop}\label{tensoridentity}Let $\lambda\in\fh^*$. Then $V\otimes M(\lambda)$ has a filtration with quotients isomorphic to $M(\lambda+\varepsilon_i)$, $i=1,\ldots, n$. Similarly, $V^*\otimes M(\lambda)$ has a filtration with quotients isomorphic to $M(\lambda-\varepsilon_i)$, $i=1,\ldots, n$.
\end{prop}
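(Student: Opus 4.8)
The plan is to exploit the standard tensor-identity philosophy: a tensor product $V \otimes M(\lambda)$ of a finite-dimensional module with a Verma module admits a filtration by Verma modules indexed by the weights of $V$ (with multiplicity), each appearing exactly once when the weights are distinct. Here $V = L(\varepsilon_1)$ has the $n$ distinct weights $\varepsilon_1, \dots, \varepsilon_n$, so the expected filtration has subquotients $M(\lambda + \varepsilon_1), \dots, M(\lambda + \varepsilon_n)$. I would organize the argument in three steps.

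\emph{Step 1: Reduce to the highest-weight generator.} Since $M(\lambda) = U_q/J_\lambda$ is generated by its highest weight vector $v_\lambda^+$, the module $V \otimes M(\lambda)$ is generated as a $U_q$-module by $V \otimes v_\lambda^+$ (because $U_q$ acts via the coproduct $\Delta$, and $\Delta(U_q)$ together with $1 \otimes U_q$ suffices to move $v \otimes v_\lambda^+$ around — more precisely, using the triangular decomposition $U_q = U_q^- U_q^0 U_q^+$ and that $U_q^+ v_\lambda^+ = \CC v_\lambda^+$). Pick a basis $v_1, \dots, v_n$ of $V$ with $v_i \in V_{\varepsilon_i}$, ordered so that $v_1$ is the highest weight vector; more generally order the weights $\varepsilon_i$ compatibly with the dominance-type order so that $\varepsilon_{i_1} > \varepsilon_{i_2}$ (as elements of $\fh^*$, in some chosen total refinement) forces $i_1$ before $i_2$. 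Set $N_j = U_q \cdot (v_{j} \otimes v_\lambda^+ + \text{higher}) \subseteq V \otimes M(\lambda)$, building an increasing filtration $0 = N_0 \subset N_1 \subset \cdots \subset N_n = V \otimes M(\lambda)$, where $N_j$ is generated by the images of $v_1 \otimes v_\lambda^+, \dots, v_j \otimes v_\lambda^+$.

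\emph{Step 2: Identify the subquotients.} The vector $v_j \otimes v_\lambda^+$ has weight $\lambda + \varepsilon_j$. Using $\Delta(E_i^{(m)}) = E_i^{(m)} \otimes 1 + \cdots$ and the fact that $E_i^{(m)} v_\lambda^+ = 0$ while $E_i^{(m)}$ applied to $v_j$ lands in $\bigoplus_{i_1} \CC v_{i_1}$ with $\varepsilon_{i_1} > \varepsilon_j$, one checks that modulo $N_{j-1}$ the image $\bar v_j \otimes v_\lambda^+$ is a highest weight vector of weight $\lambda + \varepsilon_j$ in $N_j/N_{j-1}$. Hence there is a surjection $M(\lambda + \varepsilon_j) \twoheadrightarrow N_j/N_{j-1}$. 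To see it is an isomorphism, count weight-space dimensions: $\dim(V \otimes M(\lambda))_\mu = \sum_{i} \dim M(\lambda)_{\mu - \varepsilon_i}$, and since $M(\lambda)$ is a free $U_q^-$-module of rank one its character is $\lambda$-independent, so $\sum_{j} \ch M(\lambda + \varepsilon_j)$ has exactly the same character as $V \otimes M(\lambda)$. Since each $N_j/N_{j-1}$ is a quotient of $M(\lambda+\varepsilon_j)$ and the characters already match, every surjection must be an isomorphism. This forces $N_j/N_{j-1} \cong M(\lambda + \varepsilon_j)$.

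\emph{Step 3: The dual case.} For $V^* = L(\varepsilon_1)^*$, which has weights $-\varepsilon_1, \dots, -\varepsilon_n$, the identical argument (now ordering the weights $-\varepsilon_i$ appropriately, with the highest one first) produces a Verma filtration with subquotients $M(\lambda - \varepsilon_i)$, $i = 1, \dots, n$. Alternatively, one can deduce this from the $V$-case by applying a duality functor on $\cO$, but the direct repetition is cleaner and avoids checking compatibility of duality with Verma modules.

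\emph{The main obstacle} I anticipate is Step 2 — specifically, verifying rigorously that $\bar v_j \otimes v_\lambda^+$ is genuinely a \emph{highest weight} vector in the subquotient (i.e.\ killed by all $E_i^{(m)}$) rather than merely a weight vector, and making the ordering of the $\varepsilon_i$ precise enough that the ``higher weight'' error terms land in $N_{j-1}$. This requires a careful bookkeeping with the coproduct formula for divided powers $\Delta(E_i^{(m)}) = \sum_{a+b=m} q^{?}\, E_i^{(a)} K_i^b \otimes E_i^{(b)}$ and the action on the small module $V$. The character/dimension count in the second half of Step 2 is then routine given the PBW-type basis theorem for $U_q^-$ implicit in the triangular decomposition.
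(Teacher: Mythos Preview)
Your argument is correct and is precisely the standard proof of the tensor identity for Verma modules; the paper, by contrast, gives no argument at all and simply cites \cite[Prop.\ 2.16]{APW}, which is a general statement of exactly this type (finite-dimensional module tensored with an induced module has a filtration by induced modules indexed by the weights). So you are essentially unpacking the content of that reference in the special case $V=L(\varepsilon_1)$. What your direct approach buys is self-containment and transparency about where the subquotients come from; what the citation buys is brevity and the observation that nothing specific to $V=L(\varepsilon_1)$ is needed beyond knowing its weights.

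Two small remarks on your write-up. First, your anticipated ``main obstacle'' in Step~2 is milder than you fear: for $V=L(\varepsilon_1)$ the weights $\varepsilon_1>\varepsilon_2>\cdots>\varepsilon_n$ are already \emph{totally} ordered in the root-lattice partial order (each $\varepsilon_i-\varepsilon_{i+1}$ is a simple root), and since $\Delta(E_i^{(m)})$ applied to $v_j\otimes v_\lambda^+$ collapses to $E_i^{(m)}v_j\otimes v_\lambda^+$ (only the $b=0$ term survives), the highest-weight check is immediate. Second, the point that genuinely needs a line of argument is the equality $N_n=V\otimes M(\lambda)$ in Step~1: this follows by an induction on the length of $u\in U_q^-$ showing $v_i\otimes u\,v_\lambda^+\in N_n$, using $\Delta(F_k)=F_k\otimes K_k^{-1}+1\otimes F_k$ to peel off one $F_k$ at a time (for generic $q$ the divided powers are redundant). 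Once that is in place, the character count forces all the surjections $M(\lambda+\varepsilon_j)\twoheadrightarrow N_j/N_{j-1}$ to be isomorphisms simultaneously.
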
\begin{proof}
This is a special case of \cite[Prop.\ 2.16]{APW}.
\end{proof}

\begin{cor}The category $\cO$ is stable under the functors $V\otimes -$ and $V^*\otimes -$.
\end{cor}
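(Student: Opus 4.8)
The plan is to verify the three defining conditions of $\cO$ directly for $V\otimes M$, where $M\in\cO$; the case of $V^*\otimes -$ is identical. Since $V=L(\varepsilon_1)$ is finite dimensional, $V\otimes-$ is an exact functor, and since $V$ (resp.\ $V^*$) has only the finitely many weights $\varepsilon_1,\dots,\varepsilon_n$ (resp.\ $-\varepsilon_1,\dots,-\varepsilon_n$), each of multiplicity one, I expect the weight-space condition and the $U_q^+$-local-finiteness condition to be inherited essentially by inspection, while finite generation is the condition that genuinely requires Prop.\ \ref{tensoridentity}.

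First, for the weight condition: using $\Delta(L_i)=L_i\otimes L_i$ one checks that $v\in V_\lambda$ and $w\in M_\mu$ imply $v\otimes w\in (V\otimes M)_{\lambda+\mu}$ (for the operators ${K_i;0\brack m}$ one computes with their coproducts, or simply uses that $q$ is not a root of $1$, so a vector's weight is already determined by its $L_i$-eigenvalues). Since $V=\bigoplus_{i=1}^n V_{\varepsilon_i}$ is a finite direct sum of one-dimensional weight spaces and $M=\bigoplus_\mu M_\mu$, it follows that $V\otimes M=\bigoplus_\xi(V\otimes M)_\xi$ with $(V\otimes M)_\xi=\bigoplus_{i=1}^n V_{\varepsilon_i}\otimes M_{\xi-\varepsilon_i}$, a finite direct sum of finite-dimensional spaces. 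For the $U_q^+$-condition it suffices to treat $x=v\otimes w$ with $v,w$ weight vectors; since $\Delta(E_i)=E_i\otimes 1+K_i\otimes E_i$ and $K_i$ acts by a scalar on $v$, a straightforward induction gives $U_q^+(v\otimes w)\subseteq U_q^+ v\otimes U_q^+ w$, which is finite dimensional.

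It remains to show $V\otimes M$ is finitely generated over $U_q$. I would use that $M$ admits a finite filtration $0=M_0\subset M_1\subset\cdots\subset M_k=M$ with each $M_i/M_{i-1}$ a highest weight module, say of highest weight $\nu_i$, hence a quotient of the Verma module $M(\nu_i)$. Applying the exact functor $V\otimes-$ yields a finite filtration of $V\otimes M$ with subquotients $V\otimes(M_i/M_{i-1})$, each of which is a quotient of $V\otimes M(\nu_i)$. By Prop.\ \ref{tensoridentity}, $V\otimes M(\nu_i)$ has a finite filtration whose subquotients are the (cyclic) Verma modules $M(\nu_i+\varepsilon_j)$, $j=1,\dots,n$. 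Since a module admitting a finite filtration with finitely generated subquotients is itself finitely generated, $V\otimes M(\nu_i)$ is finitely generated; hence so is its quotient $V\otimes(M_i/M_{i-1})$; hence so is $V\otimes M$, being filtered by these. Therefore $V\otimes M\in\cO$, and the same argument, using the second assertion of Prop.\ \ref{tensoridentity}, gives $V^*\otimes M\in\cO$.

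The main obstacle is the finite-generation step: unlike the other two conditions it is not visibly inherited, and it relies on resolving $M$ by Verma modules and on Prop.\ \ref{tensoridentity} to control $V\otimes M(\lambda)$ up to a Verma filtration. One could alternatively argue that $\cO$ is closed under extensions, quotients and finite direct sums and that all Verma modules lie in $\cO$; but the bookkeeping above is preferable because it only needs the trivial fact that finite generation is stable under extensions and quotients, and so avoids invoking the standard-but-not-quite-trivial assertion that an extension of weight modules with finite-dimensional weight spaces again has finite-dimensional weight spaces.
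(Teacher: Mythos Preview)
Your argument is correct and follows the paper's intended reasoning: the paper states the result as an immediate corollary of Prop.~\ref{tensoridentity} with no further proof, the implicit argument being precisely that every object of $\cO$ has a finite filtration by highest weight modules, and $V\otimes M(\lambda)$ (resp.\ $V^*\otimes M(\lambda)$) has a Verma filtration by the proposition, so stability follows since $\cO$ is closed under extensions and quotients. You have simply spelled out the details, in particular verifying the weight and local-finiteness conditions directly rather than appealing to closure of $\cO$ under extensions; this is a harmless and arguably cleaner variant of the same approach.
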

\subsection{}
Let $M\in\cO$. Let
\[ Y_M(V\otimes M) = \cR_{MV}\circ \cR_{VM}(V\otimes M). \]
Define $\pr_i(V\otimes M)$ to be the generalized eigenspace of of $Y_M$ with eigenvalue $q^i$. That is,
\[ \pr_i(V\otimes M) = \varinjlim_m \ker((Y_M-q^i)^m\colon V\otimes M \to V\otimes M). \]
By definition,
$Y_M = \cR_{MV}\cR_{VM} = \theta_{V\otimes M}\circ (\theta_V \otimes \theta_M)^{-1}$.
As each object of $\cO$ has a finite filtration by highest weight modules and $\theta$ acts by scalar multiplication on highest weight modules, the direct limit in the definition above must stabilize after finitely many terms. 
\subsection{}
For each $a\in\ZZ$, define a functor
\[
E_a\colon \cO \to \cO, \qquad
M \mapsto \pr_{2a}(V\otimes M).
\]
As $\theta_V$ is multiplication by $q^{(\varepsilon_1|\varepsilon_1+2\rho)}=q^{2n-1}$, we infer that
\[ E_a = \bigoplus_{j\in \ZZ} \pr_{2a+j+2n-1}\circ (V \otimes -) \circ \pr_j. \]
Furthermore, if $\lambda = \lambda_1\varepsilon_1 + \cdots + \lambda_n\varepsilon_n$, then
\begin{equation}\label{contentcalculation} (\lambda+\varepsilon_i|\lambda+\varepsilon_i+2\rho) - (\lambda|\lambda+2\rho)-(\varepsilon_1|\varepsilon_1+2\rho) = 2(\lambda_i-i+2). \end{equation}
Since $\theta_{M(\lambda)} = q^{(\lambda|\lambda+2\rho)}$, it follows that
$V\otimes M = \bigoplus_{a\in\ZZ} E_a(M)$ for all $M\in \cO$.
\subsection{}
Define
\[ F_a = \bigoplus_{j\in\ZZ} \pr_j \circ (V^*\otimes -)\circ \pr_{2a+j+2n-1}.\]
Since $\pr_j$ is left and right adjoint to itself, it follows that $F_a$ is left and right adjoint to $E_a$ (see Prop. \ref{dualsiso}).
\subsection{}
Let $\lambda=\lambda_1\varepsilon_1 + \cdots +\lambda_n\varepsilon_n$ and $\mu=\mu_1\varepsilon_1 + \cdots + \varepsilon_n$ be in $P$. Write $\lambda\to_a \mu$ if there exists $j$ such that $\lambda_j-j+1=a-1$, $\mu_j-j+1=a$ and $\lambda_i=\mu_i$ for $i\neq j$. Then \eqref{contentcalculation} along with Prop.\ \ref{tensoridentity} implies that
\[ [E_aM(\lambda)] = \sum_{\lambda\to_a\mu} [M(\mu)], \qquad [F_aM(\lambda)] = \sum_{\mu\to_a\lambda} [M(\mu)] \]
in $K_0(\cO)$. Hence
\[ [E_aF_aM(\lambda)]-[F_aE_aM(\lambda)] = c_{\lambda,a}[M(\lambda)], \]
where
\[ c_{\lambda,a} = \#\{i\,|\, \lambda_i-i+1 =a\} - \#\{i\,|\, \lambda_i - i=a\}.\]
\begin{prop}The functors $E_a$, $F_a$ define a weak $\fsl_2$-categorification on $\cO$.
\end{prop}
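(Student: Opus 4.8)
The plan is to verify the three bullet points in the definition of a weak $\fsl_2$-categorification for the pair $(E,F)$ obtained by setting $E=\bigoplus_a E_a$ and $F=\bigoplus_a F_a$. First I would record that $E$ and $F$ are exact: they are direct summands (cut out by the generalized-eigenspace projections $\pr_i$, which are exact since $Y_M$ is functorial and the filtration argument shows the direct limits stabilize) of the functors $V\otimes-$ and $V^*\otimes-$, and tensoring with a fixed finite-dimensional module over $\QQ(q)$ is exact. The adjunction data $(\eta,\varepsilon)$ comes from the duality morphisms $\varepsilon_V,\eta_V$ of Section~\ref{ssdeffduals}, transported through the projections; that $F$ is isomorphic to a \emph{left} adjoint of $E$ has essentially been observed already (each $\pr_j$ is biadjoint to itself, and $V^*\otimes-$ is left adjoint to $V\otimes-$ by the remark following \ref{ssdeffduals}, while Prop.~\ref{dualsiso} identifies $V^*$ with $V^{\circledast}$, giving the right adjunction too).

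Next I would check that $e=[E]$ and $f=[F]$ generate a locally finite $\fsl_2$-action on $V=\QQ\otimes K_0(\cO)$. Using the basis $\{[M(\lambda)]\}_{\lambda\in P}$ and the formulas $[E_aM(\lambda)]=\sum_{\lambda\to_a\mu}[M(\mu)]$, $[F_aM(\lambda)]=\sum_{\mu\to_a\lambda}[M(\mu)]$ already derived from \eqref{contentcalculation} and Prop.~\ref{tensoridentity}, the computation $[E_aF_aM(\lambda)]-[F_aE_aM(\lambda)]=c_{\lambda,a}[M(\lambda)]$ is in hand. The point is to reorganize $K_0(\cO)$ as a tensor product over $a\in\ZZ$ of $\fsl_2$-representations, one for each ``content line'': grouping the Verma modules by the positions of the entries $\lambda_i-i$ relative to the value $a$, each factor looks like a (restricted) weight-module built from exterior-power-type combinatorics, so $[E_a],[F_a]$ act as $e,f$ on that factor and as identity-like operators on the others. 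In particular $[E_a]$ raises and $[F_a]$ lowers a single $\fsl_2$-weight coordinate, $h_a$ acts with integer eigenvalue $c_{\lambda,a}$, and local finiteness follows because each $M(\lambda)$ involves only finitely many content lines nontrivially and each line contributes a finite-dimensional $\fsl_2$-module. Finally, the classes of simple objects $[L(\lambda)]$ are weight vectors because they lie in a single content-multidegree — this is immediate once one knows each $[L(\lambda)]$ has a fixed tuple $(\lambda_1-1,\lambda_2-2,\ldots)$ determining all the $h_a$-eigenvalues — or, more slickly, one can invoke Prop.~\ref{blocks} in reverse: it suffices that the $\fsl_2$-action be locally finite and that indecomposable (hence simple) objects have classes supported in a single block, which is forced by the weight decomposition $\cO=\bigoplus\cO_\lambda$ by $P$-weight already recorded in Section~5.

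The main obstacle I expect is the bookkeeping in the second step: making precise the ``tensor product over content lines'' decomposition of $K_0(\cO)$ and checking that $[E_a],[F_a]$ genuinely act as $\fsl_2$-generators rather than merely satisfying the commutator relation on the nose. One must be careful that the maps $\lambda\mapsto\mu$ defining $\to_a$ interact correctly with the other content lines (they don't move entries $\lambda_i$ with $\lambda_i-i$ far from $a$), so the operators for different $a$ commute appropriately and assemble into a genuine $\fsl_2\otimes\cdots$ action; this is the combinatorial heart of the matter and is where one genuinely uses that $V=L(\varepsilon_1)$ is the natural module. Everything else — exactness, the adjunction identities, the triangular-decomposition and weight-space facts — is either formal or already assembled in the preceding subsections.
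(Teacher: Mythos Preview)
Your proposal has a genuine gap at the most delicate point: showing that the classes of simple objects are weight vectors for the $\fsl_2$-action. Neither of your two suggested arguments works. The first (``each $[L(\lambda)]$ has a fixed tuple $(\lambda_1-1,\lambda_2-2,\ldots)$'') tacitly assumes $[L(\lambda)]=[M(\lambda)]$; in general $[L(\lambda)]=\sum_\mu a_{\lambda\mu}[M(\mu)]$ with several $\mu$'s appearing, and you must explain why every such $\mu$ has the \emph{same} $c_{\mu,a}$. The second argument is circular: Prop.~\ref{blocks} takes a weak $\fsl_2$-categorification as hypothesis, so you cannot invoke it to establish one. (Also, the ``decomposition $\cO=\bigoplus\cO_\lambda$ by $P$-weight'' you refer to does not exist in Section~5; what is recorded there is the weight-space decomposition $M=\bigoplus M_\lambda$ of individual modules, which says nothing about blocks of the category.)

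The paper fills this gap with the Linkage Principle: if $a_{\lambda\mu}\neq 0$ then $\mu\in W_0\cdot\lambda$, and one then checks directly (reducing to simple reflections) that $c_{\lambda,a}=c_{w\cdot\lambda,a}$ for all $w\in W_0$. This is exactly the missing combinatorial fact --- that the content-count $c_{\cdot,a}$ is constant on dot-orbits --- and it is not automatic. A smaller point: the proposition concerns a single pair $(E_a,F_a)$, not the direct sums $E=\bigoplus_a E_a$, $F=\bigoplus_a F_a$; the latter recover $V\otimes-$ and $V^*\otimes-$, which do not give an $\fsl_2$-categorification but rather (as the paper remarks later) an $\widehat{\fsl}_\infty$-type structure.
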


\begin{proof}As the classes of Verma modules are a basis for $K_0(\cO)$, in view of the above discussion, all that remains to be checked is that the classes of simple modules are weight vectors in $K_0(\cO)$. Let $[L(\lambda)] = \sum_{\mu\in P}a_{\lambda,\mu}[M(\mu)]$.
Then we infer from the Linkage Principle \cite[Thm.\ 9.1.8]{CP} that if $a_{\lambda,\mu}\neq 0$, then $\mu\in W_0\cdot\lambda$. By reducing to the case of simple reflections, we deduce that $c_{\lambda,a} = c_{w\cdot\lambda,a}$.
Thus, $[L(\lambda)]$ has weight $c_{\lambda,a}$.
\end{proof}

\begin{remark}The Linkage Principle, as stated in \cite[Thm.\ 9.1.8]{CP}, applies only to the quantum group associated to a simple Lie algebra. However, it remains valid for $U_q(\fgl_n)$. One way of seeing this is by using the embedding $U_q(\fsl_n)\hookrightarrow U_q(\fgl_n)$ (see \cite[\S12.C]{CP}).
\end{remark}

\begin{prop}\label{heckerelation}The morphism
$(\cR_{VV}-q)\circ(\cR_{VV}+q^{-1})\colon V\otimes V \to V\otimes V$
is zero.
\end{prop}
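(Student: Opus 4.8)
The plan is to pin down the eigenvalues of $\cR_{VV}$ directly and to observe that they are exactly $q$ and $-q^{-1}$. Since $q$ is not a root of $1$, the finite-dimensional module $V\otimes V$ is semisimple, and a character (or dimension) count gives the decomposition $V\otimes V\cong L(2\varepsilon_1)\oplus L(\varepsilon_1+\varepsilon_2)$ into two non-isomorphic simple summands. By Prop.\ \ref{rmatrixexists} the braiding $\cR_{VV}$ is a morphism in $\cO$, hence a $U_q$-module endomorphism of $V\otimes V$; by Schur's lemma it acts as a scalar $\alpha$ on $L(2\varepsilon_1)$ and as a scalar $\beta$ on $L(\varepsilon_1+\varepsilon_2)$. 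It therefore suffices to show $\alpha=q$ and $\beta=-q^{-1}$, for then $(\cR_{VV}-q)\circ(\cR_{VV}+q^{-1})$ annihilates both summands, hence all of $V\otimes V$.

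First I would compute $\alpha$. Fix a highest weight vector $v_1\in V_{\varepsilon_1}$ and put $v_2=F_1v_1\in V_{\varepsilon_2}$, normalised so that $E_1v_2=v_1$. The vector $v_1\otimes v_1$ is a highest weight vector of weight $2\varepsilon_1$, so it generates the summand $L(2\varepsilon_1)$; since $E_iv_1=0$ for all $i$, only the term $P_0=1\otimes 1$ in the sum defining $\cR_{VV}$ contributes, and $\cR_{VV}(v_1\otimes v_1)=\Pi^{-1}(v_1\otimes v_1)=q^{(\varepsilon_1|\varepsilon_1)}(v_1\otimes v_1)=q(v_1\otimes v_1)$, whence $\alpha=q$. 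Next, using $\Delta(E_1)=E_1\otimes 1+K_1\otimes E_1$, one checks that $w=v_1\otimes v_2-q\,v_2\otimes v_1$ satisfies $E_iw=0$, hence is a highest weight vector of weight $\varepsilon_1+\varepsilon_2$ generating $L(\varepsilon_1+\varepsilon_2)$. On $v_1\otimes v_2$ and $v_2\otimes v_1$ only the terms $P_0$ and $P_{\varepsilon_1}\in U_{\varepsilon_1}^-\otimes U_{\varepsilon_1}^+$ of the $R$-matrix act non-trivially (the higher $P_\nu$ vanish on these vectors, since one cannot raise $v_2$ or lower $v_1$ twice), and a short computation then gives $\cR_{VV}(w)=-q^{-1}w$, i.e.\ $\beta=-q^{-1}$.

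The one delicate point is the normalisation bookkeeping in this last step: the precise form of the component $P_{\varepsilon_1}$ and of the map $\Pi$, together with the relation between the present $\cR_{VW}$ and the one in \cite{Lu} recorded after Prop.\ \ref{rmatrixexists}. I expect this sign chase to be the main obstacle. As a built-in consistency check, one can determine $\alpha^2$ and $\beta^2$ with no explicit $R$-matrix formula at all: by Prop.\ \ref{ribbonO}, $\cR_{VV}\circ\cR_{VV}=\theta_{V\otimes V}\circ(\theta_V\otimes\theta_V)^{-1}$ acts on a simple summand $L(\mu)\subseteq V\otimes V$ by $q^{(\mu|\mu+2\rho)-2(\varepsilon_1|\varepsilon_1+2\rho)}$, which equals $q^2$ for $\mu=2\varepsilon_1$ and $q^{-2}$ for $\mu=\varepsilon_1+\varepsilon_2$. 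This already forces $\alpha\in\{q,-q\}$ and $\beta\in\{q^{-1},-q^{-1}\}$, so only the two signs remain, and these are settled by the highest-weight-vector evaluations above.
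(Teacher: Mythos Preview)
Your argument is correct and overlaps substantially with the paper's: both begin with the decomposition $V\otimes V\cong L(2\varepsilon_1)\oplus L(\varepsilon_1+\varepsilon_2)$ and both invoke the ribbon identity $\cR_{VV}^2=\theta_{V\otimes V}\circ(\theta_V\otimes\theta_V)^{-1}$ to see that $\cR_{VV}^2$ acts by $q^2$ and $q^{-2}$ on the two summands. The only genuine difference is in how the remaining sign ambiguity is resolved. The paper appeals to the specialisation $q\to 1$, where $\cR_{VV}$ degenerates to the ordinary flip and one reads off $+1$ on the symmetric part and $-1$ on the antisymmetric part (this is the content of the reference to \cite[Prop.\ 4.4]{LR}). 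You instead evaluate $\cR_{VV}$ directly on explicit highest weight vectors, using only $P_0=1\otimes 1$ for $\alpha=q$ and the low-degree term $P_{\varepsilon_1}$ for $\beta=-q^{-1}$.

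Your route is more self-contained --- no deformation argument is needed --- but, as you note, the $\beta$ step requires tracking the normalisation of $P_{\varepsilon_1}$ against Lusztig's conventions. One way to sidestep that bookkeeping entirely: restrict $\cR_{VV}$ to the two-dimensional weight space $(V\otimes V)_{\varepsilon_1+\varepsilon_2}=\mathrm{span}\{v_1\otimes v_2,\,v_2\otimes v_1\}$. Since $(\varepsilon_1|\varepsilon_2)=0$, the operator $\Pi^{-1}$ is the plain swap there, and $\sum_\nu P_\nu$ is unipotent upper-triangular (because $E_1v_1=0$), so $\det(\cR_{VV})=-1$ on this subspace. Its eigenvalues are $\alpha$ and $\beta$, whence $\beta=-\alpha^{-1}=-q^{-1}$ with no knowledge of $P_{\varepsilon_1}$ required. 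The paper's specialisation argument buys the same economy by different means.
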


\begin{proof}
The module $V\otimes V$ decomposes as
$V\otimes V = L(2\varepsilon)\oplus L(\varepsilon_1+\varepsilon_2)$.
Since $\cR_{VV}^2=\theta_{V\otimes V}(\theta_V\otimes \theta_V)^{-1}$,
\[ (2\varepsilon_1|2\varepsilon_1+2\rho)-2(\varepsilon_1|\varepsilon_1+2\rho)=2 \qquad\mbox{and} \qquad (\varepsilon_1+\varepsilon_2|2\varepsilon_1+2\rho)=-2, \]
we infer that $\cR_{VV}^2 - q^2$ is zero on the $L(2\varepsilon)$ component of $V\otimes V$ and $\cR_{VV}^2-q^{-2}$ is zero on the $L(\varepsilon_1 + \varepsilon_2)$ component of $V\otimes V$. Using the specialization $q\to 1$ and arguing as in \cite[Prop.\ 4.4]{LR}, we deduce that $\cR_{VV}$ is multiplication by $q$ on the $L(2\varepsilon_1)$ component, and is multiplication by $-q^{-1}$ on the $L(\varepsilon_1+\varepsilon_2)$ component of $V\otimes V$.
\end{proof}

\begin{prop}
Let $F_V, \sigma_i, Y_i$ be as in \S\ref{braidrels} with $V=L(\varepsilon_1)$.
Define
\begin{align*}
\Phi\colon H_{k+1} &\to \End(F_V^{k+1}), \\
T_i &\mapsto q\sigma_i, \\
X_i &\mapsto Y_i, \\
q_0 &\mapsto q^2.
\end{align*}Then $\Phi$ defines a representation of $H_{k+1}$ on $F_V^{k+1}$.
\end{prop}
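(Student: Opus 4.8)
The plan is to deduce every defining relation of $H_{k+1}$ from the relations for $\sigma_i$, $Y_i$ recorded in Proposition \ref{braidrels}, together with the quadratic relation of Proposition \ref{heckerelation}. First I would note that $\cO$, equipped with the $R$-matrix $\cR$ of Proposition \ref{rmatrixexists} and the twist $\theta$ of Proposition \ref{ribbonO}, verifies all the structural identities invoked in \S\ref{s:braidaction}: functoriality of $\cR$ is the diagram stated just before Proposition \ref{RmatrixO}, the two triangle identities \eqref{Rtriangle1X}--\eqref{Rtriangle2X} are Proposition \ref{RmatrixO}, and hence the hexagon \eqref{hexagonX} holds in $\cO$. (The only departure from \S\ref{ssdefstrictmonoidal}--\ref{ssdeffduals} is that $\cO$ has no unit object and no internal duals, but the proof of Proposition \ref{braidrels} never uses these: it only uses \eqref{RfunctorialX}, \eqref{Rtriangle1X}, \eqref{Rtriangle2X} and their consequence \eqref{hexagonX}.) Consequently Proposition \ref{braidrels}, applied with $V=L(\varepsilon_1)$, furnishes relations \eqref{coxeter1}--\eqref{affine4} in $\End(F_V^{k+1})$.

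Next I would transport these relations under $\Phi$ using the dictionary $T_i=q\sigma_i$, $X_i=Y_i$, $q_0=q^2$. Scaling \eqref{coxeter1} by $q^2$ and \eqref{coxeter2} by $q^3$ gives $T_iT_j=T_jT_i$ for $|i-j|>1$ and $T_iT_{i+1}T_i=T_{i+1}T_iT_{i+1}$; relation \eqref{affine4} is literally $X_iX_j=X_jX_i$; scaling \eqref{affine3} by $q$ gives $T_iX_iX_{i+1}=X_{i+1}X_iT_i$; \eqref{affine2} is literally $T_iX_j=X_jT_i$ for $|i-j|>1$; and from \eqref{affine1}, $T_iX_iT_i=(q\sigma_i)Y_i(q\sigma_i)=q^2(\sigma_iY_i\sigma_i)=q^2Y_{i+1}=q_0X_{i+1}$, which is the relation $T_iX_iT_i=q_0X_{i+1}$.

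For the quadratic relation I would use Proposition \ref{heckerelation}: since $\sigma_i$ acts on $F_V^{k+1}(M)$ through $\cR_{VV}$ placed in the $i,i+1$ tensor slots (tensored with identities), the vanishing of $(\cR_{VV}-q)\circ(\cR_{VV}+q^{-1})$ on $V^{\otimes 2}$ forces $(\sigma_i-q)\circ(\sigma_i+q^{-1})=0$ in $\End(F_V^{k+1})$, hence $(T_i+1)(T_i-q_0)=(q\sigma_i+1)(q\sigma_i-q^2)=q^2(\sigma_i+q^{-1})(\sigma_i-q)=0$. This same relation gives $T_i(T_i-q_0+1)=q_0\,\id$, so $T_i$ is invertible with $T_i^{-1}=q_0^{-1}(T_i-q_0+1)$, while $X_i=Y_i=\cR_{MV}\circ\cR_{VM}$ (in the appropriate slots) is a composite of isomorphisms, hence invertible as well; this takes care of the relations $T_iT_i^{-1}=1=T_i^{-1}T_i$ and $X_iX_i^{-1}=1=X_i^{-1}X_i$. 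Having verified all the defining relations of $H_{k+1}$, $\Phi$ extends to an algebra homomorphism $H_{k+1}\to\End(F_V^{k+1})$, i.e. a representation.

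The only non-formal ingredient is Proposition \ref{heckerelation} (the eigenvalues of $\cR_{VV}$), which is already established; everything else is bookkeeping across the dictionary above. I therefore expect no serious obstacle, and the one point that warrants a sentence of care is justifying the applicability of the abstract framework of \S\ref{s:braidaction} to $\cO$ despite $\cO$ not being a strict monoidal category with unit and internal duals — resolved by the observation that the proof of Proposition \ref{braidrels} uses only the braiding axioms supplied by Proposition \ref{RmatrixO} and the functoriality diagram preceding it.
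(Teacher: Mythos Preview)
Your proposal is correct and follows essentially the same approach as the paper: the paper's proof is the one-liner ``Immediate from Prop.\ \ref{braidrels}, Prop.\ \ref{RmatrixO}, Prop.\ \ref{ribbonO} and Prop.\ \ref{heckerelation}'', and you have simply unpacked that sentence, checking each defining relation of $H_{k+1}$ against the corresponding relation \eqref{coxeter1}--\eqref{affine4} under the substitution $T_i=q\sigma_i$, $X_i=Y_i$, $q_0=q^2$, and deriving the quadratic relation from Prop.\ \ref{heckerelation}. Your additional remark that the argument of Prop.\ \ref{braidrels} uses only the braiding axioms (not the unit or duals) is a fair clarification of why the abstract framework applies to $\cO$.
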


\begin{proof}Immediate from Prop.\ \ref{braidrels}, Prop.\ \ref{RmatrixO}, Prop.\ \ref{ribbonO} and Prop.\ \ref{heckerelation}.
\end{proof}
\subsection{}
Let $F_V$ be as in \S\ref{braidrels} with $V=L(\varepsilon_1)$. Then there is an action of $H_2$ on $F_V^2$. Let $M\in\cO$, it follows from the definitions that
\[ E^2_a(M) = \{ v\in V\otimes V \otimes M\,|\, (X_2-a)^N(X_1-a)^Nv=0, \mbox{ for sufficiently large $N$}\}.\]
Combining this with Prop.\ \ref{commutation1} we deduce that the action of $H_2$ on $F_V^2$ restricts to an action of $H_2$ on $E^2$. Hence:
\begin{prop}\label{categorificationgenericq}The functors $E=E_a$ and $F=F_a$ along with the morphisms $T=T_1$ and $X=X_1$ are an $\fsl_2$-categorification.
\end{prop}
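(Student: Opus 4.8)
The plan is to verify the four axioms in the definition of an $\fsl_2$-categorification for the adjoint pair $(E,F) = (E_a, F_a)$ together with $X = X_1 = Y_1 \in \End(E)$ and $T = T_1 = q\sigma_1 \in \End(E^2)$, with the parameters taken to be $q_0 = q^2$ and $a = q^{2a}$ (the eigenvalue used to cut out $E_a = \pr_{2a}(V\otimes -)$). The weak $\fsl_2$-categorification has already been established, so what remains is precisely the extra data and the four bullet points. Most of these follow by transporting the affine-Hecke relations of Prop.\ \ref{braidrels} through the morphism $\Phi$ and then restricting to $E^2$ as in the paragraph preceding the statement.

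**First** I would record that the braid relation $(\e_ET)(T\e_E)(\e_ET) = (T\e_E)(\e_ET)(T\e_E)$ is, after the substitution $T = q\sigma_1$ and reindexing, exactly the Coxeter relation \eqref{coxeter2} from Prop.\ \ref{braidrels} applied to $E^3$ (one needs $\sigma_1\sigma_2\sigma_1 = \sigma_2\sigma_1\sigma_2$); the scalar $q^3$ cancels on both sides. **Next**, the quadratic relation $(T + \e_{E^2})(T - q_0\e_{E^2}) = 0$ is exactly Prop.\ \ref{heckerelation}: indeed $(q\sigma_1 + 1)(q\sigma_1 - q^2) = q^2(\sigma_1 + q^{-1})(\sigma_1 - q) = q^2(\cR_{VV} - q)(\cR_{VV} + q^{-1})\cdot(\text{sign/rescaling bookkeeping})$, which vanishes on $V\otimes V\otimes M$, hence on the subfunctor $E^2$; here one must be careful that $\sigma_1 = \cR_{VV}\otimes\id$ so the relation of Prop.\ \ref{heckerelation} applies verbatim to the first two tensor factors. **Then** the mixed relation $T\circ(\e_E X)\circ T = q_0 X\e_E$ (we are in the generic case $q_0 = q^2 \neq 1$) is relation \eqref{affine1}, $\sigma_1 Y_1 \sigma_1 = Y_2$, combined with $\sigma_1 Y_2 \sigma_1 = Y_1$ (which follows from \eqref{affine1} since $\sigma_1^2$ is not the identity in general, so one instead reads off $Y_2 = \sigma_1 Y_1 \sigma_1$ and uses the Hecke quadratic relation to invert $\sigma_1$); written out, $T(\e_E X)T = q^2\sigma_1 Y_2 \sigma_1 = q^2 Y_1 = q_0 X\e_E$, matching the definition $\gamma_n(X_i) = \e_{E^{\cdot}}X\e_{E^{\cdot}}$ — I should double-check that the indexing convention in $\gamma_n$ puts $X$ in the correct tensor slot so that $\e_E X$ really is $Y_2$ and $X\e_E$ is $Y_1$ (or vice versa, adjusting the statement accordingly). **Finally**, local nilpotence of $X - a = Y_1 - q^{2a}$ is immediate from the construction: by definition $E_a(M) = \pr_{2a}(V\otimes M)$ is the generalized $q^{2a}$-eigenspace of $Y_M$, and the direct limit in that definition stabilizes after finitely many steps because $\theta$ acts by a scalar on each highest-weight subquotient of the finite filtration of $M$ — so $(Y_1 - q^{2a})^N = 0$ on $E_a(M)$ for $N$ large depending on $M$.

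**The main obstacle** I anticipate is bookkeeping rather than mathematics: keeping the scalar $q$ in $T = q\sigma_1$, the value $q_0 = q^2$, and the tensor-slot conventions in $\gamma_n$, $Y_i$, $\sigma_i$ all consistent, and in particular checking that the action of $H_2$ genuinely \emph{restricts} from $F_V^2$ to the subfunctor $E^2 \subset F_V^2$ — this uses Prop.\ \ref{commutation1} as indicated in the excerpt: the formula there shows $T_1$ maps the set $\{v : (X_2-a)^N(X_1-a)^N v = 0\}$ into itself because the commutator $T_1(X_2-a)^N - (X_1-a)^N T_1$ lands in the right ideal generated by such powers, and one needs the analogous statement with the roles of $X_1, X_2$ interchanged, plus the observation that $X_1, X_2$ preserve $E^2$ since they commute. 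Once this closure is in hand, the relations hold on $E^2$ because they hold on $F_V^2$ and $E^2$ is a summand (via $\pr$), and I would simply cite the proof immediately preceding the proposition. So the proof should read: \emph{Immediate from Prop.\ \ref{braidrels}, Prop.\ \ref{heckerelation}, Prop.\ \ref{commutation1} and the preceding discussion, once one checks that the $H_2$-action restricts to $E^2$, which is exactly the computation indicated above.}
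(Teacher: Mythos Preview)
Your proposal is correct and follows the paper's approach exactly: the paper's ``proof'' is just the paragraph preceding the proposition (ending in ``Hence:''), which establishes that the $H_2$-action on $F_V^2$ restricts to $E^2$ via Prop.~\ref{commutation1}, after which the axioms are immediate from the Hecke relations already verified through $\Phi$.

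One small bookkeeping correction that resolves your flagged uncertainty: with the paper's conventions $\gamma_2(X_1)=\e_E X$ and $\gamma_2(X_2)=X\e_E$, while $\Phi(X_i)=Y_i$; so in fact $\e_E X$ corresponds to $Y_1$ and $X\e_E$ to $Y_2$ (the reverse of what you tentatively wrote). With this indexing the mixed axiom $T\circ(\e_E X)\circ T = q_0\, X\e_E$ becomes $(q\sigma_1)Y_1(q\sigma_1)=q^2 Y_2$, i.e.\ $\sigma_1 Y_1\sigma_1 = Y_2$, which is precisely relation~\eqref{affine1} --- no need to pass to the inverse relation $\sigma_1 Y_2 \sigma_1 = Y_1$ at all.
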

\subsection{}
Let $E=E_a$ and $F=F_a$. For $m\in\ZZ$, let $\cO_m$ denote the subcategory of $\cO$ consisting of those modules $V$ such that
$[EF(V)]-[FE(V)]=m[V]$
in $K_0(\cO)$. Let $\Theta$ be as in \S\ref{ss:rickardcomplex}. Then Thm.\ \ref{reflectionequivalence} gives:
\begin{cor}The complex of functors $\Theta$ induces a self-equivalence of $\Hob(\cO)$ and of $\Db(\cO)$. By restriction $\Theta$ induces equivalences $\Hob(\cO_{-m})\mapright{\sim}\Hob(\cO_m)$ and $\Db(\cO_{-m})\mapright{\sim}\Db(\cO_m)$. Furthermore, the operator on $K_0(\cO)$ induced by $\Theta$ coincides with the reflection $s$ on $K_0(\cO)$ (viewed as an $\fsl_2$-module).
\end{cor}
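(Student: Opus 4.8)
The strategy is to apply Thm.~\ref{reflectionequivalence} to the $\fsl_2$-categorification produced in Prop.~\ref{categorificationgenericq}: essentially all the substance is already in place, and what remains is to check that $\cO$ falls under the standing hypotheses recalled above and to match the subcategories $\cO_m$ with the weight-space subcategories appearing in Thm.~\ref{reflectionequivalence}.

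First I would record that $\cO$ satisfies the running assumptions on the abelian category $\cA$: it is abelian, enriched over $\CC$, the endomorphism ring of a simple object $L(\lambda)$ is $\CC$ (an endomorphism preserves the one-dimensional highest weight space, hence is a scalar), and every object has finite length. For the last point, each $M\in\cO$ has a finite filtration with highest weight subquotients, so it is enough to bound the length of a highest weight module $M$ of highest weight $\lambda$; since $q$ is not a root of unity, the Linkage Principle (\cite[Thm.~9.1.8]{CP}, valid for $U_q(\fgl_n)$ as remarked above) forces every composition factor of $M$ to be of the form $L(\mu)$ with $\mu\in W_0\cdot\lambda$, which is a finite set, and $[M:L(\mu)]\leq\dim M_\mu<\infty$ for each such $\mu$. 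Granting this, Prop.~\ref{categorificationgenericq} equips $\cO$ with an $\fsl_2$-categorification in the required sense, with data $(E,F)=(E_a,F_a)$, $X=X_1$, $T=T_1$.

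Next, Prop.~\ref{blocks} yields a decomposition $\cO=\bigoplus_{m\in\ZZ}\cA_m$, where $\cA_m$ is the full subcategory of objects whose class lies in the $m$-weight space $K_0(\cO)_m$ of $K_0(\cO)$ viewed as an $\fsl_2$-module. Since the classes of the simple modules are weight vectors and span $K_0(\cO)\otimes\QQ$, the endomorphism $[V]\mapsto[EF(V)]-[FE(V)]$ of $K_0(\cO)\otimes\QQ$ is diagonalizable with integer eigenvalues; hence for $V\in\cO$ the condition $[V]\in K_0(\cO)_m$ is equivalent to $[EF(V)]-[FE(V)]=m[V]$. Thus $\cA_m$ coincides with the category $\cO_m$ fixed before the statement, and $\cO=\bigoplus_m\cO_m$.

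Finally, form the complex of functors $\Theta=\bigoplus_m\Theta_m$ as in \S\ref{ss:rickardcomplex}---the divided-power functors $E^{(\mathrm{sgn},k)}$ and $F^{(1,k)}$ entering its construction exist precisely because Prop.~\ref{categorificationgenericq} supplies the affine Hecke algebra action on the powers of $E$---and invoke Thm.~\ref{reflectionequivalence}: $\Theta$ induces a self-equivalence of $\Hob(\cO)$ and of $\Db(\cO)$, restricts to equivalences $\Hob(\cO_{-m})\mapright{\sim}\Hob(\cO_m)$ and $\Db(\cO_{-m})\mapright{\sim}\Db(\cO_m)$, and acts on $K_0(\cO)$ as the reflection $s$. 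The only non-formal point is the finite-length verification in the first step, since $\cO$ is defined by finiteness conditions on weight spaces whereas Thm.~\ref{reflectionequivalence} presupposes that all objects have finite length; everything else is a direct application.
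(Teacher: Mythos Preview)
Your argument is correct and takes the same route as the paper, which simply records that the corollary is an immediate application of Thm.~\ref{reflectionequivalence} to the $\fsl_2$-categorification furnished by Prop.~\ref{categorificationgenericq}. You have merely made explicit the verification that $\cO$ satisfies the standing hypotheses on $\cA$ (finite length, $\CC$-linear, $\End$ of simples equal to $\CC$) and the identification of $\cO_m$ with the weight-space subcategory $\cA_m$ of Prop.~\ref{blocks}, all of which the paper leaves implicit.
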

\subsection{}
Let $M\in\cO$ be indecomposable. By the Linkage Principle \cite[Thm.\ 9.1.8]{CP}, if $L(\lambda)$ and $L(\mu)$ are composition factors of $M$, then $\lambda$ and $\mu$ are in the same dot-orbit of $W_0$. For $\lambda\in P$, let $\cO_{\lambda}$ be the Serre subcategory of $\cO$ generated by simples of the form $L(w\cdot \lambda)$, $w\in W_0$.
The following result is proved along the same lines as \cite[Thm.\ 7.24]{ChRo}.
\begin{thm}\label{dequivgenericq}Let $\lambda,\mu\in P$ have the same stabilizer under the dot action of $W_0$. Then there are equivalences
\[ \Hob(\cO_{\lambda})\mapright{\sim} \Hob(\cO_{\mu}) \qquad\mbox{and}\qquad
\Db(\cO_{\lambda})\mapright{\sim} \Db(\cO_{\mu}) \]
that induce the map
\[ [M(w\cdot \lambda)] \mapsto [M(w\cdot \mu)], \qquad w\in W_0, \]
at the level of Grothendieck groups.
\end{thm}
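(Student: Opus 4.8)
The plan is to follow the strategy of \cite[Thm.\ 7.24]{ChRo}: to realise the passage from $\cO_\lambda$ to $\cO_\mu$ as a composite of reflection equivalences coming from the $\fsl_2$-categorifications of Prop.\ \ref{categorificationgenericq}, and then to identify the resulting map on Grothendieck groups. First I would set up the combinatorics. By the Linkage Principle the block $\cO_\nu$ is determined by the multiset $\{(\nu+\rho)_i\}_{1\le i\le n}$, which I picture as a configuration of $n$ beads on $\ZZ$; its Verma modules are the $M(w\cdot\nu)$ with $w$ running over $W_0/W_\nu$, where $W_\nu$ is the stabiliser of $\nu$ under the dot action, i.e.\ the subgroup of $W_0=S_n$ fixing $\nu+\rho$, so $W_\nu=\prod_v S_{I_v}$ with $I_v=\{i:(\nu+\rho)_i=v\}$. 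Hence the hypothesis $W_\lambda=W_\mu$ says precisely that $\lambda+\rho$ and $\mu+\rho$ induce the same partition $\{I_v\}$ of $\{1,\dots,n\}$ into clusters of equal coordinates. I would then introduce two elementary moves on bead configurations: (a) slide a whole cluster from a value $v$ to $v\pm 1$ when no bead occupies $v\pm 1$; and (b) interchange two clusters occupying adjacent values $p$ and $p+1$. Both preserve the cluster partition, hence $W_\nu$, and viewing the clusters as labelled particles on the line, move (a) slides a particle through empty space while move (b) lets two adjacent particles pass through one another; so any two configurations with the same cluster partition are joined by a finite chain of such moves, and it suffices to handle a single elementary move $\lambda\rightsquigarrow\mu$.

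Next I would realise each elementary move by a reflection equivalence. For $a\in\ZZ$, Prop.\ \ref{categorificationgenericq} gives the $\fsl_2$-categorification $(E_a,F_a)$ on $\cO$, and Thm.\ \ref{reflectionequivalence} the Rickard complex $\Theta^{(a)}$, a self-equivalence of $\Hob(\cO)$ and of $\Db(\cO)$ that acts on $K_0(\cO)$ as the reflection $s^{(a)}$. Since a self-equivalence permutes the indecomposable blocks and $c_{\nu,a}$ is constant on $\cO_\nu$, the map $s^{(a)}$ carries $K_0(\cO_\nu)$ isomorphically onto $K_0(\cO_{\nu'})$ for a unique block $\cO_{\nu'}$, whence $\Theta^{(a)}(\Db\cO_\nu)=\Db\cO_{\nu'}$; reading off $s^{(a)}$ from $[E_aM(\nu)]=\sum_{\nu\to_a\kappa}[M(\kappa)]$ and $[F_aM(\nu)]=\sum_{\kappa\to_a\nu}[M(\kappa)]$ shows that $\nu'+\rho$ is $\nu+\rho$ with the beads at positions $a+n-2$ and $a+n-1$ interchanged. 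Choosing $a$ so that exactly one of these positions is occupied realises move (a), and choosing both occupied realises move (b); in each case I obtain equivalences $\Hob(\cO_\nu)\xrightarrow{\sim}\Hob(\cO_{\nu'})$ and $\Db(\cO_\nu)\xrightarrow{\sim}\Db(\cO_{\nu'})$.

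It then remains to check that $\Theta^{(a)}$ induces $[M(w\cdot\nu)]\mapsto\pm[M(w\cdot\nu')]$, with $\nu'$ as above; the equality $W_\nu=W_{\nu'}$ turns this into a signed permutation of $W_0/W_\nu$, matching the claimed map after composing along the chain. Using Prop.\ \ref{tensoridentity} and the content identity \eqref{contentcalculation}, tensoring with $V=L(\varepsilon_1)$ categorifies, in the direction relevant to $a$, tensoring with the standard $2$-dimensional $\fsl_2$-module $V_1$; so the $\fsl_2$-submodule of $K_0(\cO)$ spanned by the Verma classes of all blocks reached from $\cO_\nu$ by moving beads between positions $a+n-2$ and $a+n-1$ is a sum of tensor powers of $V_1$, with the Verma classes corresponding to the tensor basis. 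Because $s=\exp(-f)\exp(e)\exp(-f)$ is group-like for the coproduct, $s^{(a)}$ acts on each such tensor power as $s_{V_1}^{\otimes\bullet}$, a signed permutation of the tensor basis, which gives the claim; composing along a chain of moves from $\lambda$ to $\mu$ and absorbing the accumulated sign by homological shifts (as in \cite{ChRo}, the homological degree in which $\Theta$ concentrates each Verma module accounting for the sign) yields the asserted equivalences inducing $[M(w\cdot\lambda)]\mapsto[M(w\cdot\mu)]$.

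I expect the main obstacle to be exactly this last step: pinning down the $\fsl_2$-structure of $K_0(\cO)$ near each $a$ finely enough to see that $\Theta^{(a)}$ is a signed permutation of Verma classes rather than just inducing the reflection $s^{(a)}$, and then tracking the signs and the identifications $W_0/W_\bullet$ coherently all the way along the chain of moves. This is precisely the technical content of \cite[Thm.\ 7.24]{ChRo} transposed to the present setting; the combinatorial reduction and the construction of the $\Theta^{(a)}$ are by comparison routine, the former being elementary and the latter supplied by Prop.\ \ref{categorificationgenericq} and Thm.\ \ref{reflectionequivalence}.
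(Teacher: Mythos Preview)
Your proposal is correct and follows the paper's strategy closely. The one difference worth noting is in the combinatorial reduction: the paper adjoins to your transpositions $t_a$ (which are exactly your moves (a) and (b)) the global shift $d\colon m\mapsto m+1$, shows that same stabiliser is equivalent to same $T$-orbit for $T=\langle d,\,t_a\rangle$, and then reduces to a single generator $t=d$ or $t=t_a$. The case $t=d$ is handled not by a Rickard complex but by the abelian equivalence $L(\varepsilon_1+\cdots+\varepsilon_n)\otimes -$, which is cleaner than threading a cluster past all the others via a chain of reflections; your observation that the $t_a$'s alone already act transitively on tuples with a fixed cluster partition is correct, so omitting $d$ costs only a longer composite. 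Your $K_0$ computation is exactly the paper's: it introduces the $\fsl_2$-module $U_a=\bigoplus_{i\in\ZZ}\CC u_i$ (the standard two-dimensional representation on $u_{a-1},u_a$, trivial on the other summands), defines an $\fsl_2$-map $U_a^{\otimes n}\to K_0(\cO)$ by $u_{\nu+\rho}\mapsto[M(\nu)]$, and reads off $s[M(\nu)]=(-1)^{h^-(\nu+\rho)}[M(t_a(\nu+\rho)-\rho)]$ with $h^-(\nu+\rho)=\#\{i:(\nu+\rho)_i=a\}$. Since $W_0$ permutes coordinates, $h^-$ is constant on the dot-orbit, so the sign is absorbed by the single shift $\Theta_a[h^-]$; this is precisely the point you flag as the main obstacle, and the paper disposes of it in one line.
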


\begin{proof}
For $i\in\ZZ$, define $t_i\colon \ZZ \to \ZZ$ by 
\[ t_i(m) = \begin{cases} m+1 & \mbox{if $m=i-1$}, \\
m-1  & \mbox{if $m=i$}, \\
m & \mbox{otherwise},\end{cases}\]
and define $d\colon\ZZ\to \ZZ$ by $d(m) = m+1$. 
Let $T$ be the group generated by $d$ and $t_i, i\in\ZZ$.
Identify $P$ with $\ZZ^n$ via
$\lambda_1\varepsilon_1 + \cdots + \lambda_n\varepsilon_n \mapsto (\lambda_1,\ldots,\lambda_n)$.
This defines an action of $T$ on $P$ via the diagonal action of $T$ on $\ZZ^n$. It is clear that the regular action of $W_0$ on $P$ commutes with the action of $T$.
We claim that two elements $\nu,\nu'\in P$ have the same stabilizer in $W_0$ (regular action) if and only if they are in the same $T$-orbit. Indeed, both conditions are equivalent to the following: if $\nu=\nu_1\varepsilon_1 + \cdots +\nu_n\varepsilon_n$ and $\nu'=\nu'_1\varepsilon_1+\cdots+\nu'_n\varepsilon_n$, then for all $i$ and $j$, $\nu_i-\nu_j=0$ if and only if $\nu'_i-\nu'_j=0$.

Thus, if $\lambda$ and $\mu$ have the same $W_0$ stabilizer under the dot action, then $t(\lambda+\rho) = \mu+\rho$, for some some $t\in T$. Without loss of generality, we may assume that $t=d$ or $t=t_a$ for some $a\in\ZZ$.
If $t=d$, then consider the module $V'=L(\varepsilon_1 + \cdots + \varepsilon_n)$. The module $V'$ is one dimensional and it follows that the functor $V'\otimes - \colon\cO\to\cO$ is an equivalence. Moreover, the induced map at the level of $K_0(\cO)$ is precisely $[M(w\cdot \lambda)]\mapsto [M(w\cdot \mu)]$.
If $t=t_a$, then let $\Theta_a$ be the complex of functors obtained from the $\fsl_2$ categorification $E=E_a$ and $F=F_a$. Let $s$ denote the reflection that $\Theta_a$ induces on $K_0(\cO)$ (viewed as an $\fsl_2$-module). Define an $\fsl_2$-module $U_a=\bigoplus_{i\in\ZZ} u_i$ as follows:
\[ eu_{a-1}=u_a, \quad fu_a = u_{a-1}, \quad eu_i=0 \quad\mbox{if $i\neq a-1$}, \quad fu_i=0 \quad\mbox{if $i\neq a$}.\]
That is, $U_a$ is the direct sum of the defining representation of $\fsl_2$ and infinitely many copies of the trivial module. Thus, on $U_a$ the reflection $s$ is given by
\[ su_{a-1}=u_a,\quad su_a = -u_{a-1}, \quad su_i=u_i\quad\mbox{if $i\neq a,a-1$}.\]
Thus, on the tensor product $U^{\otimes n}_a$, we have
$su_{\nu} = (-1)^{h^-(\nu)}u_{t_a\nu}$,
where
$u_{\nu} = u_{\nu_1}\otimes \cdots \otimes u_{\nu_n}$ and
$h^-(\nu)=\#\{i\,|\, \nu_i=a\}$.
The map
\[ U_a^{\otimes n} \to K_0(\cO), \qquad u_{\nu+\rho}\mapsto [M(\nu)], \]
is an $\fsl_2$-module homomorphism. Hence
\[ s[M(\nu)] = (-1)^{h^-(\nu+\rho)} [M(t_a(\nu+\rho)-\rho)].\]
Note that
\[ h^-(\lambda+\rho) = h^-(w(\lambda+\rho)) = h^-(w\cdot \lambda + \rho), \]
for all $w\in W_0$. Thus, if we let $h^-=h^-(\lambda+\rho)$, then $\Theta_a[h^-]$ restricts to an equivalence $\Db(\cO_{\lambda})\mapright{\sim} \Db(\cO_{\mu})$ that induces the map $[M(w\cdot\lambda)] \mapsto [M(w\cdot \mu)]$, $w\in W_0$,
at the level of Grothendieck groups.
\end{proof}

\begin{remark}Combining the various $\fsl_2$-categorifications $(E_a, F_a)$, $a\in\ZZ$, gives a categorification of the affine Lie algebra $\widehat{\fsl}_{\infty}$ in the sense of \cite{Ro}, see \cite[\S5]{W} and \cite{BK}. Also cf.\ \cite{LLT}.
\end{remark}

\section{$\fsl_2$-categorifications and derived equivalences for $q$ a root of $1$}
In this section $q$ will be an $\ell$th root of unity, $\ell\neq 2$. We will allow $\ell=\infty$, this corresponds to the case of generic $q$. However, since the categories we will now construct $\fsl_2$-categorifications on are different from $\cO$, the results of this section do not subsume those of the previous one. 

\subsection{}Let $\cC$ be the full subcategory of $\cO$ consisting of finite dimensional modules. Let $V\in\cC$. Then it follows from \cite[Lemma 1.13]{APW} that if $V_{\lambda}\neq 0$ then $V_{w(\lambda)}\neq 0$, for all $w\in W_0$. The following result is classical.
\begin{lemma}Let $\lambda\in P$ and let $M(\lambda)$ be the corresponding Verma module. Let $\Sigma$ be the set of $U_q$-submodules $K$ of $M(\lambda)$ such that $M(\lambda)/K$ is finite dimensional. Then $\Sigma$ has a unique minimal element.
\end{lemma}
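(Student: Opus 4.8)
The plan is to produce a single submodule that is contained in every member of $\Sigma$ and itself belongs to $\Sigma$; it is then forced to be the unique minimal element. Throughout I use that every $U_q$-submodule of a module in $\cO$ is again a weight module, so that $M(\lambda)$ together with all the submodules and quotients below decomposes compatibly with the weight-space decomposition $M(\lambda)=\bigoplus_\mu M(\lambda)_\mu$, where each $M(\lambda)_\mu$ is finite dimensional and $M(\lambda)_\mu=0$ unless $\mu\le\lambda$ in the dominance order (i.e. $\lambda-\mu$ a nonnegative integer combination of the $\varepsilon_i-\varepsilon_{i+1}$).

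First I would bound which weights can occur in a finite dimensional quotient of $M(\lambda)$. If $K\in\Sigma$ then $M(\lambda)/K$ is a nonzero finite dimensional module in $\cC$, so by \cite[Lemma 1.13]{APW} its set of weights is stable under $W_0$; being a quotient of $M(\lambda)$, that set is also contained in $\{\mu\in P:\mu\le\lambda\}$. Now a $W_0$-stable subset of $P$ bounded above in the dominance order is finite: each $W_0$-orbit has a unique dominant representative, and $\{\nu\in P^+:\nu\le\lambda\}$ is finite, since writing $\nu=\nu_1\varepsilon_1+\cdots+\nu_n\varepsilon_n$ with $\nu_1\ge\cdots\ge\nu_n$, the requirement that $\lambda-\nu$ be a nonnegative integer combination of the $\varepsilon_i-\varepsilon_{i+1}$ forces $\lambda_n\le\nu_i\le\lambda_1$ for all $i$. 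Let $S\subset P$ be the (finite) set of weights $\mu$ with $w\mu\le\lambda$ for all $w\in W_0$. Then for every $K\in\Sigma$ and every $\mu\notin S$ we have $(M(\lambda)/K)_\mu=0$, i.e. $M(\lambda)_\mu\subseteq K$.

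Next I would let $K_{\min}$ be the $U_q$-submodule of $M(\lambda)$ generated by all the weight spaces $M(\lambda)_\mu$ with $\mu\notin S$. Being generated by weight vectors it is a weight submodule, and $(K_{\min})_\mu=M(\lambda)_\mu$ for $\mu\notin S$, so $M(\lambda)/K_{\min}=\bigoplus_{\mu\in S}(M(\lambda)/K_{\min})_\mu$ is a finite direct sum of finite dimensional spaces; hence $K_{\min}\in\Sigma$ (in particular $\Sigma\neq\varnothing$). Conversely, by the previous paragraph every $K\in\Sigma$ contains $M(\lambda)_\mu$ for all $\mu\notin S$, hence contains the submodule they generate, so $K_{\min}\subseteq K$. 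Thus $K_{\min}$ lies in $\Sigma$ and is contained in every element of $\Sigma$, which makes it the unique minimal element.

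The only step demanding genuine input is the finiteness in the second paragraph: one needs both that the weight set of a finite dimensional $U_q$-module is $W_0$-stable (\cite[Lemma 1.13]{APW}, the substitute for the classical symmetry of weight multiplicities) and that only finitely many dominant weights are $\le\lambda$; everything else is formal bookkeeping with the weight-space decomposition. One can alternatively take $K_{\min}=\bigcap_{K\in\Sigma}K$ and argue that $\Sigma$ is closed under finite intersections while a downward-directed family of subspaces of a finite dimensional space stabilizes, but the construction above seems the most direct route.
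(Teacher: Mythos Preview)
Your argument is correct and is essentially the paper's own proof: the paper sets $\pi=\lambda-\ZZ_{\geq 0}R^+$, $\pi'=W_0(P^+\cap\pi)$, and takes $N(\lambda)$ to be the submodule generated by $\bigoplus_{\mu\notin\pi'}M(\lambda)_\mu$, which is exactly your $K_{\min}$ since your set $S=\{\mu:w\mu\le\lambda\text{ for all }w\in W_0\}$ coincides with $\pi'$ (a weight lies in $S$ iff its dominant $W_0$-conjugate is $\le\lambda$). The only cosmetic slip is the word ``nonzero'' when describing $M(\lambda)/K$; $K=M(\lambda)$ is allowed, but your argument is unaffected.
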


\begin{proof}
All weights of $M(\lambda)$ belong to $\pi=\lambda-\ZZ_{\geq 0}R^+$. Let $\pi' = W_0(P^+\cap \pi)$ and let $N(\lambda)$ denote the $U_q$-module generated by $\bigoplus_{\mu\not\in\pi'} M(\lambda)_{\mu}$. If $K\in\Sigma$, then $N(\lambda)\subseteq K$. Furthermore, the set $\pi'$ is finite and the weight spaces of $M(\lambda)$ are finite dimensional, thus $M(\lambda)/N(\lambda)\in \Sigma$. Hence, $N(\lambda)$ is the required minimal element.
\end{proof}
\subsection{}
Let $N(\lambda)\subseteq M(\lambda)$ be the minimal element of the lemma above. The \emph{Weyl module} $\Delta(\lambda)$ is defined by
\[ \Delta(\lambda) = M(\lambda)/N(\lambda). \]
By construction, $\Delta(\lambda)=0$ unless $\lambda\in P^+$. In fact, $\Delta(\lambda)\neq 0$ if and only if $\lambda\in P^+$.
The module $\Delta(\lambda)$ is a highest weight module. Further, any other highest weight module with highest weight $\lambda$ is necessarily isomorphic to a quotient of $\Delta(\lambda)$. The module $\Delta(\lambda)$ has a unique simple quotient, denoted $L_q(\lambda)$. Every simple module in $\cC$ is isomorphic to $L_q(\lambda)$ for some $\lambda\in P^+$.

\subsection{}
Let $s_i\in W_0$ denote the transposition $(\varepsilon_i, \varepsilon_{i+1})$. The \emph{affine Weyl group} is generated by the affine reflections $s_{i,m}$, $i=1,\ldots, n-1$, $m\in \ZZ$, where $s_{i,m}(\lambda)=s_i(\lambda) + m\ell(\varepsilon_i - \varepsilon_{i+1})$, $\lambda\in \fh^*$. The dot action of $W_{\ell}$ on $\fh^*$ is defined as $w\cdot\lambda = w(\lambda+\rho) - \rho$, $w\in W_{\ell}$, $\lambda\in\fh^*$. The following result is the Linkage Principle for $q$ a root of unity.
\begin{prop}\cite[Thm.\ 8.1]{APW}Let $\lambda,\mu\in P^+$. If $L(\mu)$ occurs as a composition factor of $\Delta(\lambda)$, then $\mu\in W_{\ell}\cdot \lambda$.
\end{prop}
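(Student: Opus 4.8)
The plan is to cite the Linkage Principle essentially verbatim from \cite{APW}, but since the statement here is phrased for $U_q(\fgl_n)$ rather than for the quantum group of a simple Lie algebra, I would first reduce to that setting exactly as in the Remark following the weak $\fsl_2$-categorification proposition: use the embedding $U_q(\fsl_n) \hookrightarrow U_q(\fgl_n)$ (see \cite[\S12.C]{CP}) to transport the result across. So the first step is to observe that a finite-dimensional simple $U_q(\fgl_n)$-module $L_q(\lambda)$, restricted to $U_q(\fsl_n)$, decomposes according to the image of $\lambda$ in the $\fsl_n$-weight lattice, and that Weyl modules and composition series are compatible with this restriction.

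Next I would invoke \cite[Thm.\ 8.1]{APW} directly for $U_q(\fsl_n)$: if $L(\bar\mu)$ is a composition factor of the Weyl module $\Delta(\bar\lambda)$, then $\bar\mu \in \widehat{W}_\ell \cdot \bar\lambda$, where $\widehat{W}_\ell$ is the affine Weyl group attached to $\fsl_n$ and the root of unity $q$. The third step is bookkeeping: pull this back to $\fgl_n$. Since the central character (the value of $L_1L_2\cdots L_n$) is preserved under forming submodules and quotients, any composition factor $L(\mu)$ of $\Delta(\lambda)$ has the same $\fgl_n$-central character as $L(\lambda)$, i.e.\ $\mu_1 + \cdots + \mu_n = \lambda_1 + \cdots + \lambda_n$. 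Combined with $\bar\mu \in \widehat{W}_\ell \cdot \bar\lambda$ and the fact that $W_\ell$ acts on $\fh^*$ through the group generated by the $s_{i,m}$ (which preserve $\varepsilon_1 + \cdots + \varepsilon_n$-degree), this forces $\mu \in W_\ell \cdot \lambda$ for the $\fgl_n$ dot action as defined in the section.

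The main obstacle I anticipate is purely notational rather than mathematical: the paper has defined $W_\ell$ (called the ``affine Weyl group'') via generators $s_{i,m}$ acting on $\fh^*$ of $\fgl_n$, and one has to check that this group coincides (in its action on weights modulo the center) with the affine Weyl group to which \cite[Thm.\ 8.1]{APW} refers. Since the $s_{i,m}$ are exactly the reflections in the affine root hyperplanes $\langle \varepsilon_i - \varepsilon_{i+1}, \cdot\rangle \in \ell\ZZ$, and these are the hyperplanes governing the $\fsl_n$ linkage, the identification is immediate. Given the parallel with the earlier Remark, I would keep the proof short: simply write that this is \cite[Thm.\ 8.1]{APW}, noting as before that although \emph{loc.\ cit.}\ is stated for the quantum group of a simple Lie algebra, it carries over to $U_q(\fgl_n)$ via $U_q(\fsl_n) \hookrightarrow U_q(\fgl_n)$ together with the observation that composition factors of $\Delta(\lambda)$ share its $\fgl_n$-central character.
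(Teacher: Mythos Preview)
The paper gives no proof of this proposition at all: it is stated with the citation \cite[Thm.\ 8.1]{APW} and nothing further. Your proposal is therefore not so much a different route as an elaboration the paper chose to omit; the reduction via $U_q(\fsl_n)\hookrightarrow U_q(\fgl_n)$ together with matching the central character is correct and mirrors the earlier Remark for generic $q$, though it may in fact be unnecessary here since \cite{APW} is formulated in terms of a root datum and so already covers the reductive case $\fgl_n$ directly.
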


\subsection{}
Let $V=\Delta(\varepsilon_1)$. Then $V$ has weights $\varepsilon_1, \ldots, \varepsilon_n$ and $V^*$ has weights $-\varepsilon_1, \ldots, -\varepsilon_n$. It is clear that $\cC$ is stable under tensoring with $V$ and $V^*$.

\begin{prop}Let $\lambda\in P^+$. Then $V\otimes \Delta(\lambda)$ has a filtrations with quotients isomorphic to $\Delta(\lambda+\varepsilon_i)$, $i=1,\ldots, n$. Similarly, $V^*\otimes \Delta(\lambda)$ has a filtration with quotients isomorphic to $\Delta(\lambda-\varepsilon_i)$, $i=1,\ldots, n$.
\end{prop}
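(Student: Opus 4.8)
The plan is to bootstrap from the Verma-module statement and then pin down the subquotients by a character count. Recall that $V=\Delta(\varepsilon_1)$ is the natural representation, with one-dimensional weight spaces of weights $\varepsilon_1,\dots,\varepsilon_n$; likewise $V^*$ has one-dimensional weight spaces of weights $-\varepsilon_1,\dots,-\varepsilon_n$. Exactly as in Prop.~\ref{tensoridentity} --- the argument of \cite[Prop.\ 2.16]{APW} being insensitive to whether $q$ is a root of unity --- the module $V\otimes M(\lambda)$ carries a filtration $0=F_0\subset F_1\subset\cdots\subset F_n=V\otimes M(\lambda)$ whose successive quotients are the Verma modules $M(\lambda+\varepsilon_{i_1}),\dots,M(\lambda+\varepsilon_{i_n})$ for some ordering $(i_1,\dots,i_n)$ of $(1,\dots,n)$, and similarly $V^*\otimes M(\lambda)$ has a filtration with Verma quotients $M(\lambda-\varepsilon_{i_j})$.

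Next I would transport this filtration across the canonical surjection $M(\lambda)\twoheadrightarrow\Delta(\lambda)$. Since $V\otimes-$ is exact, applying it gives a surjection $\pi\colon V\otimes M(\lambda)\twoheadrightarrow V\otimes\Delta(\lambda)$, and pushing forward the $F_j$ produces a filtration $\pi(F_0)\subseteq\cdots\subseteq\pi(F_n)=V\otimes\Delta(\lambda)$. Its $j$-th subquotient $\pi(F_j)/\pi(F_{j-1})$ is a quotient of $F_j/F_{j-1}\cong M(\lambda+\varepsilon_{i_j})$; being also a subquotient of the finite-dimensional module $V\otimes\Delta(\lambda)$, it is either zero or a finite-dimensional highest weight module of highest weight $\lambda+\varepsilon_{i_j}$, and in either case it is a quotient of $\Delta(\lambda+\varepsilon_{i_j})$ (with the convention $\Delta(\mu)=0$ for $\mu\notin P^+$). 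So $V\otimes\Delta(\lambda)$ already has a filtration whose factors are quotients of $\Delta(\lambda+\varepsilon_i)$, $i=1,\dots,n$.

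It then remains to see these factors are the full Weyl modules, and for this I would compare characters. Write $J(\nu)=\sum_{w\in W_0}(-1)^{\ell(w)}e^{w\nu}$ for the alternating sum of formal exponentials, so that $\ch\,\Delta(\mu)=J(\mu+\rho)/J(\rho)$ (the Weyl character formula is valid for quantum Weyl modules). Since $W_0$ permutes $\varepsilon_1,\dots,\varepsilon_n$, one has $\big(\sum_i e^{\varepsilon_i}\big)J(\lambda+\rho)=\sum_i\sum_{w}(-1)^{\ell(w)}e^{w(\lambda+\rho)+w\varepsilon_i}=\sum_i J(\lambda+\rho+\varepsilon_i)$, hence $\ch(V\otimes\Delta(\lambda))=\sum_{i=1}^n J(\lambda+\rho+\varepsilon_i)/J(\rho)$. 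As $\lambda\in P^+$ the tuple $\lambda+\rho$ is strictly decreasing, so for each $i$ the tuple $\lambda+\rho+\varepsilon_i$ either is again strictly decreasing --- equivalently $\lambda+\varepsilon_i\in P^+$, and then $J(\lambda+\rho+\varepsilon_i)/J(\rho)=\ch\,\Delta(\lambda+\varepsilon_i)$ --- or else has a repeated entry, which occurs exactly when $\lambda_{i-1}=\lambda_i$, and then both $J(\lambda+\rho+\varepsilon_i)$ and $\Delta(\lambda+\varepsilon_i)$ are zero. Therefore $\ch(V\otimes\Delta(\lambda))=\sum_{i=1}^n\ch\,\Delta(\lambda+\varepsilon_i)$. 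Writing the $j$-th factor above as $\Delta(\lambda+\varepsilon_{i_j})/K_j$ and summing characters now forces $\sum_j\ch\,K_j=0$; since characters have nonnegative coefficients, every $K_j=0$, and the factors are exactly $\Delta(\lambda+\varepsilon_i)$, $i=1,\dots,n$. The statement for $V^*$ is obtained in the same way, using the filtration of $V^*\otimes M(\lambda)$ and the identity $\big(\sum_i e^{-\varepsilon_i}\big)J(\lambda+\rho)=\sum_i J(\lambda+\rho-\varepsilon_i)$, noting that $\lambda-\varepsilon_i\in P^+$ precisely when $\lambda_i>\lambda_{i+1}$ while otherwise $\lambda+\rho-\varepsilon_i$ has a repeated entry.

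The only non-formal ingredient is the Verma-module filtration, which I am taking from \cite{APW}; everything after that is bookkeeping. The step that needs genuine care is the second paragraph, where a priori the Verma factors could degenerate to proper quotients of the $\Delta(\lambda+\varepsilon_{i_j})$ upon applying $\pi$ --- it is precisely the character identity of the third paragraph that rules this out. One could also avoid the bootstrap entirely: since the natural module $V=\Delta(\varepsilon_1)$ and its dual $V^*=\Delta(-\varepsilon_n)$ are Weyl modules, and the class of modules admitting a Weyl filtration is closed under tensor products \cite{APW}, the modules $V\otimes\Delta(\lambda)$ and $V^*\otimes\Delta(\lambda)$ have Weyl filtrations, whose factors are then read off from the same character computation.
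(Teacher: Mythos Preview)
Your argument is correct. The paper itself gives no argument at all: its entire proof is the sentence ``This is a special case of \cite[Prop.\ 2.16]{APW}'', exactly as for the Verma-module version (Prop.~\ref{tensoridentity}). So you have supplied considerably more than the paper does.

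Compared with the cited result, your route is somewhat indirect: you deduce the Weyl-module filtration by pushing forward the Verma filtration along $M(\lambda)\twoheadrightarrow\Delta(\lambda)$ and then repairing the possible degeneration of the factors via the Weyl character identity. This works cleanly, and the character computation you wrote down is correct (the key point being that $\sum_i e^{\pm\varepsilon_i}$ is $W_0$-invariant, so it passes inside $J(-)$). The alternative you mention at the end --- that $V$ and $V^*$ are themselves Weyl modules and that Weyl filtrations are stable under tensor product, so the existence of a Weyl filtration on $V\otimes\Delta(\lambda)$ is immediate and only the identification of the factors requires the character count --- is closer in spirit to how \cite{APW} actually packages the statement, and is the shorter path if one is willing to quote that closure result. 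Either way the content is the same; your bootstrap has the virtue of making explicit why the non-dominant $\lambda\pm\varepsilon_i$ contribute zero.
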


\begin{proof}This is a special case of \cite[Prop.\ 2.16]{APW}.
\end{proof}

\begin{remark}It is understood in the Proposition above that $\Delta(\mu)=0$ if $\mu\not\in P^+$.
\end{remark}
\subsection{}
Let $M\in\cC$. Let
\[ Y_M(V\otimes M) = \cR_{MV}\otimes \cR_{VM}(V\otimes M). \]
Let $\pr_i(V\otimes M)$ be the generalized eigenspace of $Y_M$ with eigenvalue $q^i$. For each $a\in\ZZ/\ell\ZZ$, define
\[
E_a\colon \cC \to \cC, \qquad
M\mapsto \pr_{2a}(V\otimes M).
\]
As in \S\ref{s:generic}, 
$E_a = \bigoplus_{j\in\ZZ/\ell\ZZ} \pr_{2a+j+2n-1}\circ (V\otimes - )\circ \pr_j$,
and the functor
\[ F_a = \bigoplus_{j\in\ZZ/\ell\ZZ} \pr_j \circ (V^*\otimes -)\circ\pr_{2a+j+2n-1} \]
is left and right adjoint to $E_a$.
\subsection{}
Let $\lambda=\lambda_1\varepsilon_1 + \cdots +\varepsilon_n$ and $\mu=\mu_1\varepsilon_1 + \cdots + \mu_n\varepsilon_n$ be in $P^+$. Write $\lambda\to_a \mu$ if there exists $j$ such that $\lambda_j-j+1\equiv a-1 \mod \ell$, $\mu_j-j+1\equiv a\mod \ell$ and $\lambda_i \equiv \mu_i \mod \ell$ for $i\neq j$. Then
\[ [E_a\Delta(\lambda)] = \sum_{\lambda \to_a \mu} [\Delta(\mu)], \qquad [F_a\Delta(\lambda)] = \sum_{\mu\to_a\lambda}[\Delta(\mu)], \]
in $K_0(\cC)$. Hence
\[ [E_aF_a\Delta(\lambda)] - [F_aE_a\Delta(\lambda)] = c_{\lambda,a}[\Delta(\lambda)], \]
where
\begin{align*}
c_{\lambda, a} &= \#\{ i\,|\, \lambda_i-i+1 \equiv a\mod\ell\mbox{ and }\lambda+\varepsilon_i \in P^+\} \\
&\quad - \#\{ i \,|\, \lambda_i -i \equiv a\mod\ell\mbox{ and }\lambda-\varepsilon_i \in P^+\}. \end{align*}
As in \S\ref{s:generic}, we have
\begin{prop}\label{categorificationrootsof1}The functors $E_a$ and $F_a$ define an $\fsl_2$-categorification.
\end{prop}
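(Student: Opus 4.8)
The plan is to follow \S\ref{s:generic} essentially line for line, with Weyl modules $\Delta(\lambda)$ in place of Verma modules $M(\lambda)$ and the affine Weyl group $W_\ell$ in place of $W_0$; throughout I take $V=\Delta(\varepsilon_1)$ and use that $\cC$ is a full subcategory of the ribbon category $\cO$, so that Prop.\ \ref{RmatrixO} and Prop.\ \ref{ribbonO} apply unchanged to objects of $\cC$.

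First I would establish the underlying weak $\fsl_2$-categorification. The functors $E_a$ and $F_a$ defined above are exact, being composites of $V\otimes-$ (resp.\ $V^*\otimes-$) with projections onto direct summands, and $F_a$ is a two-sided adjoint of $E_a$; this supplies the adjunction data demanded by the definition. On $\QQ\otimes K_0(\cC)$, with basis $\{[\Delta(\lambda)]:\lambda\in P^+\}$, the operators $[E_a]$ and $[F_a]$ act by the formulas recorded just before the statement, and I would check, exactly as in \S\ref{s:generic} and the proof of Thm.\ \ref{dequivgenericq}, that these define a locally finite $\fsl_2$-representation; local finiteness is immediate here, since applying $[E_a]$ (resp.\ $[F_a]$) alters the residue tuple $(\lambda_i-i\bmod\ell)_{i=1}^n$ in a single coordinate, passing it between two fixed adjacent residues, whence $[E_a]^{\,n+1}=[F_a]^{\,n+1}=0$. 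The only point at which the root-of-unity case genuinely departs from \S\ref{s:generic} is the assertion that classes of simple modules are weight vectors: writing $[L_q(\lambda)]=\sum_\mu a_{\lambda\mu}[\Delta(\mu)]$, the Linkage Principle \cite[Thm.\ 8.1]{APW} forces $\mu\in W_\ell\cdot\lambda$ whenever $a_{\lambda\mu}\neq 0$, so it suffices to prove that $\mu\mapsto c_{\mu,a}$ is constant on $W_\ell\cdot\lambda\cap P^+$ --- a finite combinatorial verification, reduced to the case of a single affine reflection.

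Next I would supply the categorification data. With $F_V,\sigma_i,Y_i$ as in \S\ref{braidrels} and $V=\Delta(\varepsilon_1)$, set $\Phi\colon H_{k+1}\to\End(F_V^{k+1})$, $T_i\mapsto q\sigma_i$, $X_i\mapsto Y_i$, $q_0\mapsto q^2$. By Prop.\ \ref{braidrels}, Prop.\ \ref{RmatrixO} and Prop.\ \ref{ribbonO}, this is a representation of $H_{k+1}$ as soon as one has the analogue of Prop.\ \ref{heckerelation}, namely $(\cR_{VV}-q)\circ(\cR_{VV}+q^{-1})=0$ on $V\otimes V$. This I would obtain by specialization: $V\otimes V$ has a Weyl filtration with sections $\Delta(2\varepsilon_1)$ and $\Delta(\varepsilon_1+\varepsilon_2)$, on which $\cR_{VV}^2=\theta_{V\otimes V}\circ(\theta_V\otimes\theta_V)^{-1}$ acts by $q^2$ and $q^{-2}$; moreover $\cR_{VV}$ and all modules involved are defined over $\ZZ[q,q^{-1}]$, so since the degree-two identity holds for generic $q$ by Prop.\ \ref{heckerelation} it holds identically over $\ZZ[q,q^{-1}]$, in particular after specializing $q$ to our root of unity. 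Granting this, the rest is verbatim from \S\ref{s:generic}: $E_a^2(M)$ is the joint generalized eigenspace of $X_1=Y_1$ and $X_2=Y_2$ for the eigenvalue $q^{2a}$ (finite-dimensionality of objects of $\cC$ is used here), Prop.\ \ref{commutation1} shows the $H_2$-action restricts to it, and restricting $X_1$ and $q\sigma_1$ gives $X\in\End(E_a)$ and $T\in\End(E_a^2)$ for which \eqref{coxeter2} is the braid relation, the analogue of Prop.\ \ref{heckerelation} is the quadratic relation $(T+\e_{E^2})\circ(T-q^2\e_{E^2})=0$, and \eqref{affine1} gives $T\circ(\e_E X)\circ T=q^2X\e_E$; also $X-q^{2a}$ is locally nilpotent since objects of $\cC$ are finite-dimensional. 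Since $q^2\neq 1$ we are in the $q_0\neq 1$ branch of the axioms, so $(E_a,F_a,X,T)$ is an $\fsl_2$-categorification with $q_0=q^2$.

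The hard part is the combinatorial identity $c_{\mu,a}=c_{\lambda,a}$ for $\mu\in W_\ell\cdot\lambda\cap P^+$ (equivalently, the statement that the formulas for $[E_a\Delta(\lambda)]$, $[F_a\Delta(\lambda)]$ and $c_{\lambda,a}$ do assemble into $\fsl_2$-strings). In \S\ref{s:generic} the corresponding $W_0$-invariance was transparent because there $c_{\lambda,a}$ depends only on the multiset $\{(\lambda+\rho)_i\}$ and involves no dominance conditions; at a root of unity one must instead show that an affine reflection $s_{i,m}$ induces a bijection between $\{\,i:\lambda_i-i+1\equiv a\pmod{\ell},\ \lambda+\varepsilon_i\in P^+\,\}$ and the corresponding index set for $s_{i,m}\cdot\lambda$, and likewise for the second index set defining $c_{\lambda,a}$, so that the residue bookkeeping modulo $\ell$ and the dominance constraints have to be reconciled simultaneously.
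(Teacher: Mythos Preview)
Your proposal is correct and mirrors the paper's approach exactly; indeed, the paper offers nothing beyond the phrase ``As in \S\ref{s:generic}'', so your write-up is already more thorough than the original.

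The step you flag as ``the hard part'' admits a simplification that bypasses any case analysis on affine reflections. For $\lambda\in P^+$, the dominance constraints in the two counts defining $c_{\lambda,a}$ cancel in pairs: one has $\lambda+\varepsilon_i\notin P^+$ precisely when $i\geq 2$ and $\lambda_{i-1}=\lambda_i$, and in that case $\lambda_{i-1}-(i-1)=\lambda_i-i+1\equiv a\pmod\ell$, so the index $j=i-1$ satisfies both $\lambda_j-j\equiv a\pmod\ell$ and $\lambda-\varepsilon_j\notin P^+$; conversely every such $j$ arises this way. Hence
\[
c_{\lambda,a}=\#\{i\mid \lambda_i-i+1\equiv a\pmod\ell\}-\#\{i\mid \lambda_i-i\equiv a\pmod\ell\},
\]
which depends only on the multiset $\{(\lambda+\rho)_i\bmod\ell\}_{i=1}^n$ and is therefore manifestly constant on $W_\ell\cdot\lambda$. (Incidentally, $W_\ell\cdot\lambda\cap P^+$ is infinite in general, so ``finite combinatorial verification'' is not quite the right description; but the reduction to generators you outline would also work.)
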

Let $\Theta$ be as in \S\ref{ss:rickardcomplex}, then we also have:
\begin{cor}The complex of functors $\Theta$ induces a self-equivalence of $\Hob(\cC)$ and of $\Db(\cC)$. By restriction $\Theta$ induces equivalences $\Hob(\cC_{-m})\mapright{\sim} \Hob(\cC_m)$ and $\Db(\cC_{-m})\mapright{\sim} \Db(\cC_m)$. Furthermore, the operator on $K_0(\cC)$ induced by $\Theta$ coincides with the reflection $s$ on $K_0(\cC)$ (viewed as an $\fsl_2$-module).
\end{cor}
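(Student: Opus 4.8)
The plan is to deduce the corollary directly from Theorem \ref{reflectionequivalence}, applied to the $\fsl_2$-categorification on $\cC$ produced in Proposition \ref{categorificationrootsof1}. First I would recall that, by that proposition, the pair $(E,F)=(E_a,F_a)$ together with $X=X_1\in\End(E)$ and $T=T_1\in\End(E^2)$ forms an $\fsl_2$-categorification on $\cC$. The one genuine verification is that $\cC$ satisfies the standing hypotheses imposed on the ambient abelian category in \S4: every object is a finite successive extension of simple objects, the $\Hom$-spaces are $\CC$-vector spaces with bilinear composition, and the endomorphism ring of a simple object is $\CC$. For $\cC$ this holds because every object is a finite extension of Weyl modules $\Delta(\lambda)$, each of which has finite length, the simple objects are the $L_q(\mu)$, $\mu\in P^+$, and $\End_{U_q}(L_q(\mu))=\CC$ by Schur's lemma over $\CC$.

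Next I would identify the subcategories occurring in the statement. Applying Proposition \ref{blocks} to this categorification yields the decomposition $\cC=\bigoplus_{m\in\ZZ}\cC_m$, where $\cC_m$ is the full subcategory of objects whose class lies in the $m$-weight space of $K_0(\cC)$; equivalently, $M\in\cC_m$ if and only if $[E_aF_a(M)]-[F_aE_a(M)]=m[M]$ in $K_0(\cC)$, which is how $\cC_m$ is to be read in the statement. With this in place, the Rickard complex $\Theta=\bigoplus_{m\in\ZZ}\Theta_m$ of \S\ref{ss:rickardcomplex} is defined for the present categorification exactly as in the general construction, from the functors $E^{(\mathrm{sgn},m+r)}$, $F^{(1,r)}$ and the counit maps.

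Finally I would invoke Theorem \ref{reflectionequivalence} verbatim: it asserts precisely that such a $\Theta$ induces a self-equivalence of $\Hob(\cC)$ and of $\Db(\cC)$, that by restriction it induces equivalences $\Hob(\cC_{-m})\mapright{\sim}\Hob(\cC_m)$ and $\Db(\cC_{-m})\mapright{\sim}\Db(\cC_m)$, and that the operator induced on $K_0(\cC)$ is the reflection $s$ of the $\fsl_2$-module $K_0(\cC)$. This is exactly the content of the corollary, so nothing further is required. I do not anticipate a real obstacle: the difficulty was entirely front-loaded into establishing the $\fsl_2$-categorification (Proposition \ref{categorificationrootsof1}) and into the abstract Theorem \ref{reflectionequivalence}. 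The only point that demands any care is confirming that $\cC$ — and not merely $\cO$ — fits the axiomatic framework of \S4, above all finiteness of length, and that the $\cC_m$ in the statement coincide with the weight-space subcategories furnished by Proposition \ref{blocks}.
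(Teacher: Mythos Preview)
Your proposal is correct and matches the paper's approach exactly: the paper gives no proof at all for this corollary, simply stating it as an immediate consequence of Theorem~\ref{reflectionequivalence} applied to the $\fsl_2$-categorification of Proposition~\ref{categorificationrootsof1}. Your extra care in checking that $\cC$ satisfies the standing hypotheses of \S4 and in identifying the $\cC_m$ via Proposition~\ref{blocks} is more explicit than the paper itself, but entirely in line with how the result is obtained.
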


\begin{remark}Combining the various $\fsl_2$-categorifications $(E_a, F_a)$, $a\in\ZZ/\ell\ZZ$, gives a categorification of the affine Lie algebra $\widehat{\fsl}_{\ell}$ in the sense of \cite{Ro}, see \cite[\S5]{W} and \cite{BK}. Also cf.\ \cite{LLT}.
\end{remark}

\end{document}